\documentclass[12pt]{amsart}
\usepackage{amsmath}
\usepackage{amsfonts}
\usepackage{latexsym}
\usepackage{amssymb}
\usepackage{enumerate}
\usepackage[dvips]{graphics}

\newcommand{\Z}{{\mathbf Z}}
\newcommand{\Q}{{\mathbf Q}}
\newcommand{\R}{{\mathbf R}}

\newcommand{\coker}{{\mbox{\rm coker}}}

\newcommand{\supp}{\rm {supp}}
\newcommand{\rk}{{\rm {rk}}}
\newcommand{\comment}[1]{}

\def\ker{{\rm Ker }}

\def\R{\mathbf R}

\newtheorem{theorem}{Theorem}[section]

\newtheorem{proposition}[theorem]{Proposition}
\newtheorem{lemma}[theorem]{Lemma}
\newtheorem{example}{Example}
\newtheorem{corollary}[theorem]{Corollary}
\newtheorem{definition}{Definition}
\newtheorem{remark}[theorem]{Remark}

\begin{document}
\title[Two particles on a graph]{Topology of configuration space of two particles on a graph, I}
\author[Kathryn ~Barnett and Michael ~Farber ]{Kathryn ~Barnett and Michael~Farber}
\address{Department of Mathematics, University of Durham, Durham DH1 3LE, UK}
\email{Kathryn.Barnett@durham.ac.uk}
\email{Michael.Farber@durham.ac.uk}


\subjclass{55R80; 57M15}

\date{\today}

\keywords{Configuration spaces, graphs, planar graphs, deleted products, cohomology.}

\begin{abstract}
In this paper we study the homology and cohomology of configuration spaces $F(\Gamma, 2)$ of two distinct particles on a graph $\Gamma$. Our main tool is  intersection theory for cycles in graphs. We obtain an explicit description of the cohomology algebra $H^\ast(F(\Gamma, 2))$ in the case of planar graphs.
\end{abstract}

\maketitle

Let $F(X, n)$ denote the space of configurations of $n$ distinct points lying in a topological space $X$, i.e.
$$F(X, n)=\{(x_1, x_2,\dots,x_n)\in X\times   \dots \times X; x_i \not= x_j \, \, \mbox{for}\, \, \, i\not=j\}.$$
Spaces $F(X, n)$, first introduced by Fadell and Neuwirth in \cite{FN}, play an important role in modern topology and its applications. Topology of configuration spaces $F(X, n)$ is well-studied and many important results have been obtained, see for example \cite{Ar}, \cite{Co}, \cite{Va}, \cite{To}. The best understood case is when $X=\R^m$ is a Euclidean space; the cohomology algebra of $F(\R^m,n)$ is described by the theory of subspace arrangements. The Totaro spectral sequence \cite{To} allows one to compute the cohomology algebra of $F(X, n)$ when $X$ is a smooth manifold.

In this paper we study spaces $F(\Gamma, 2)$ when $\Gamma$ is a finite graph; these spaces appear in topological robotics as configuration spaces of two objects moving along a one-dimensional network without collisions, see \cite{Gr}, \cite{GK}, \cite{Far}, \cite{Far1}. The space $F(X, 2)=X\times X - \Delta_X$ is also known under the name of \lq\lq deleted product\rq\rq; deleted products of graphs were studied in \cite{Patty61}, \cite{Patty62}, \cite{Co} and \cite{CP}.

Unfortunately several published papers about the topology of $F(\Gamma, 2)$ contain serious errors. For example, Theorem 4.2 from \cite{Patty62} is incorrect, and paper \cite{Cop}, page 1006, gives a wrong description of the second homology group of $F(\Gamma, 2)$. Regretfully these mistakes were not explicitly acknowledged and analyzed in the subsequent work of H. Copeland and C.W. Patty. However they were mentioned implicitly; thus, in the abstract to \cite{CP} the authors write: {\it \lq\lq the two dimensional Betti numbers of $F(\Gamma, 2)$ are larger than they were originally thought to be\rq\rq}.

Recently, important progress in the analysis of the topology of configuration spaces of graphs was made in the work of A. Abrams \cite{Ab} and D. Farley and L. Sabalka \cite{FS1}, \cite{FS2}, \cite{FS3}, \cite{FS4}, \cite{Farley}. As a result, cohomology algebras of unordered configuration spaces of trees were computed; the case of two point configuration spaces of trees was studied in \cite{Far}.

In this paper we describe an intersection theory for cycles in graphs which is crucial for the study of Betti numbers of configuration spaces $F(\Gamma, 2)$.
This theory allows us to find explicit bases for $H_i(F(\Gamma, 2))$ where $i=1, 2$, for planar graphs $\Gamma$. In the final section we describe the cup-product $\cup: H^1(F(\Gamma, 2)) \times H^1(F(\Gamma, 2)) \to H^2(F(\Gamma, 2))$.
To illustrate our results we state the following theorem (see Theorem \ref{thm3}):

{\bf Theorem.} {\it Let $\Gamma\subset \R^2$ be a connected planar graph such that every vertex $v$ has valence $\mu(v)\ge 3$. Denote by $U_0$, $U_1$,  $\dots, U_r$ the connected components of the
 complement $\R^2-\Gamma$ where $r=b_1(\Gamma)$ and $U_0$ is the unbounded component. Assume that (1) the closure of every domain $\bar U_i$ with $i= 1, \dots, r$ is contractible, and $\bar U_0$ is homotopy equivalent to the circle $S^1$ and (2) for every pair $i, j\in \{0, 1, \dots, r\}$
the intersection $\bar U_i \cap \bar U_j$ is connected.
Then the Betti numbers of $F(\Gamma, 2)$ are given by
\begin{eqnarray}
b_1(F(\Gamma, 2)) = 2b_1(\Gamma) + 1
\end{eqnarray}
and
\begin{eqnarray}
\qquad  b_2(F(\Gamma, 2)) = b_1(\Gamma)^ 2- b_1(\Gamma) +2 - \sum_{v\in V(\Gamma)}(\mu(v)-1)(\mu(v)-2).
\end{eqnarray}
Here $V(\Gamma)$ denotes the set of vertices of $\Gamma$.}

We also describe explicit generators of $H_i(F(\Gamma, 2); \Q)$ for $i=1, 2$.

In this paper the symbols $H_\ast(X)$ and $H^\ast(X)$ denote homology and cohomology groups with integral coefficients. The other coefficient groups in homology and cohomology are indicated explicitly.

\section{Basic facts about $F(\Gamma, 2)$}

Let $\Gamma$ be a finite graph, i.e. a finite simplicial complex of dimension one. As usual, {\it edges} of $\Gamma$ are defined as
closures of 1-dimensional simplices.

For a point $x\in \Gamma$ its {\it support} $\supp\{x\}$ is defined as the closure of the simplex containing $x$. In other words, if $x$ is a vertex of $\Gamma$ then $\supp\{x\}=x$ and if $x$ lies in the interior of an edge $e$ then $\supp\{x\}= e$.

Denote by $D(\Gamma, 2)\subset \Gamma\times \Gamma$ the set of all pairs $(x, y)\in \Gamma\times \Gamma$ such that $\supp\{x\}$ and $\supp\{y\}$ are disjoint. Clearly, $D(\Gamma, 2)$ is a closed subset of $\Gamma\times \Gamma$ and $D(\Gamma, 2)$ is contained in $F(\Gamma, 2)$. Moreover,
$D(\Gamma, 2)$ is a subcomplex of $\Gamma\times \Gamma$, viewed with its obvious cell-complex structure. The cells of $D(\Gamma, 2)$ are as follows:
(0) zero-dimensional cells are ordered pairs $uv$ where $u$ and $v$ are distinct vertices of $\Gamma$; (1) one-dimensional cells are of two types $ev$ and $ve$ where $e$ is an edge and $v$ is a vertex not incident to $e$; (2) two-dimensional cells of $D(\Gamma, 2)$ have the form $ee'$ where $e$ and $e'$ are edges of $\Gamma$ having no common vertices. To explain our notations, note that $ee'$ is the set of all configurations $(x,y)$ with $x\in  e$ and $y\in e'$.

Consider the involution $\tau: \Gamma\times \Gamma \to \Gamma\times \Gamma$ permuting the points, i.e. $\tau(x,y)=(y,x)$ for $x,y\in \Gamma$. Clearly $\tau$ induces involutions on $F(\Gamma, 2)$ and on $D(\Gamma, 2)$.

\begin{lemma}\label{defret} There exists an equivariant strong homotopy retraction $F(\Gamma, 2)\to D(\Gamma, 2)$.
\end{lemma}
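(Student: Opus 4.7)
The plan is to construct an explicit equivariant strong deformation retraction $H: F(\Gamma, 2) \times [0,1] \to F(\Gamma, 2)$ satisfying $H_0 = \Id$, $H_1(F(\Gamma, 2)) \subseteq D(\Gamma, 2)$, $H_t|_{D(\Gamma, 2)} = \Id$, and $H_t \circ \tau = \tau \circ H_t$. Parameterize every edge of $\Gamma$ linearly by $[0,1]$. The complement $F(\Gamma, 2) \setminus D(\Gamma, 2)$ consists of pairs $(x, y)$ for which $\supp\{x\}$ and $\supp\{y\}$ meet. Because $\Gamma$ is a simplicial complex (no loops, no multi-edges), this complement decomposes into three geometric strata: (i) both $x$ and $y$ lie in the interior of a common edge $e$; (ii) one coordinate is a vertex $v$ and the other lies in the interior of an edge incident to $v$; (iii) $x$ and $y$ lie in the interiors of two distinct edges sharing a common vertex.

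On each stratum the retraction is a linear push of $(x, y)$ toward a pair of distinct vertices of $\Gamma$, which automatically lies in $D(\Gamma, 2)$ by the simpliciality hypothesis (distinct vertices have disjoint supports). For type (i), on $e = [v_0, v_1]$ with $0 < x < y < 1$, take $H_t(x, y) = ((1-t)x, (1-t)y + t)$, ending at $(v_0, v_1)$, and use the mirror formula when $x > y$. For type (ii), with $x = v$ and $y$ in the interior of $e = [v, w]$, take $H_t(v, y) = (v, (1-t)y + t)$, ending at $(v, w)$. For type (iii), a similar linear push toward the far endpoints of the two edges works, but the precise formula must be chosen with care to preserve continuity with type (ii), as discussed below. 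Extend by $H_t = \Id$ on $D(\Gamma, 2)$; this gives the strong-retraction property automatically, and each formula is symmetric in the two coordinates, yielding $\tau$-equivariance.

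The main obstacle is continuity of the patched map at the boundaries between strata. The delicate case is the transition from type (iii) to type (ii): as a coordinate of a configuration in $e^\circ \times e'^\circ$ approaches the shared vertex $v$, the type (iii) formula must degenerate to the type (ii) formula at $(v, y)$. A naive symmetric ``push both coordinates'' rule in type (iii) fails this test, so the push must be modulated so that each coordinate freezes at $v$ in the limit. A clean remedy is to organize the entire construction inside each product star $\mathrm{St}(v) \times \mathrm{St}(v)$ and dampen the push speeds by bumps vanishing as a coordinate reaches $v$, so that neighboring product stars automatically glue via the identity on their common boundary. Alternatively, the lemma follows from Abrams' general deformation-retract theorem \cite{Ab}, whose combinatorial hypotheses---each essential cycle of length $\geq n+1$ and each essential edge-path of length $\geq n-1$---are automatic for $n = 2$ in any simplicial graph, and whose discrete-Morse construction is $S_n$-equivariant and thus directly produces the required $\tau$-equivariance.
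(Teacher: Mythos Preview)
The paper does not actually prove this lemma: it cites Shapiro and Wu as earlier sources, notes that Shapiro's argument is flawed, and then simply refers the reader to the argument of Theorem~2.4 in Abrams' thesis for an equivariant deformation retraction. Your fallback to Abrams is therefore exactly the paper's approach. Your explicit cell-by-cell construction goes further than the paper attempts, and the formulas for types (i) and (ii) are correct and glue consistently. For type (iii), however, you never write down a working formula, and the suggested ``dampen near $v$'' patch does not obviously handle the other continuity requirement you omit: as one coordinate approaches the \emph{far} endpoint (so the pair enters $D(\Gamma,2)$), the push on the \emph{other} coordinate must also die. The clean resolution is to retract the square $e\times e'$ radially from the shared corner $(v,v)$ onto the opposite L-shaped boundary $(\{w\}\times e')\cup(e\times\{w'\})\subset D(\Gamma,2)$; this single formula is automatically the identity on those two faces and restricts to your type-(ii) push on the faces through $v$. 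One caution on your citation: Abrams' subdivision hypotheses are usually stated with both bounds equal to $n+1$, so the path condition is \emph{not} automatic for $n=2$ in an arbitrary simplicial graph; the paper sidesteps this by invoking the \emph{argument} of Abrams' proof rather than the theorem as stated.
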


More precisely, we claim that there exists a continuous homotopy $h_t: F(\Gamma, 2)\to F(\Gamma, 2)$ where
$t\in [0,1]$,
 with the properties $h_t\tau =\tau h_t$, $h_0={\rm {id}}$, $h_t|D(\Gamma, 2)={\rm {id}}$, and $h_1(F(\Gamma, 2))=D(\Gamma, 2)$.

This result is well-known, see A. Shapiro \cite{Shapiro}, and W.- T. Wu \cite{Wu}, \cite{Wubook}. Note that the proof of Lemma 2.1 from \cite{Shapiro} is incorrect. Instead we refer the reader to the argument of the proof of Theorem 2.4 from \cite{Ab} which gives an equivariant deformation retraction of $F(\Gamma, 2)$ onto $D(\Gamma, 2)$, as required.

In view of Lemma \ref{defret} we may replace $F(\Gamma, 2)$ by $D(\Gamma, 2)$ while studying homotopy properties of $F(\Gamma, 2)$. The space $D(\Gamma, 2)$ has the advantage of being a finite polyhedron.

In this paper we discuss the homology of the configuration spaces of graphs. In connection with this the following statement is useful:

\begin{corollary}\label{euler} Let $V(\Gamma)$ denote the set of vertices of $\Gamma$ and $\mu(v)$ be the number of edges incident to a vertex $v\in V(\Gamma)$. Then
the Euler characteristic $ \chi(F(\Gamma, 2))$ is given by
\begin{eqnarray} \label{chif}
\chi(\Gamma)^2+\chi(\Gamma) -\sum\limits_{v\in V(\Gamma)} (\mu(v)-1)(\mu(v)-2).
\end{eqnarray}
\end{corollary}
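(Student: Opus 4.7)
The plan is to compute the Euler characteristic of $D(\Gamma,2)$ directly from its explicit cell structure, and then invoke Lemma \ref{defret} to conclude $\chi(F(\Gamma,2))=\chi(D(\Gamma,2))$. Since $D(\Gamma,2)$ sits inside $\Gamma\times\Gamma$ as a subcomplex, it is natural to write
\begin{equation*}
\chi(D(\Gamma,2)) \; = \; \chi(\Gamma\times\Gamma) \; - \; \sum_{i=0}^{2}(-1)^{i}\,N_i,
\end{equation*}
where $N_i$ is the number of $i$-cells of $\Gamma\times\Gamma$ that fail to lie in $D(\Gamma,2)$, i.e.\ cells $\sigma\times\sigma'$ for which $\supp(\sigma)\cap\supp(\sigma')\neq\emptyset$. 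Then $\chi(\Gamma\times\Gamma)=\chi(\Gamma)^2$ by the K\"unneth/product formula, and the remaining task is bookkeeping.

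I would enumerate the missing cells according to the dimensions of $\sigma$ and $\sigma'$. Writing $V=|V(\Gamma)|$ and $E$ for the number of edges, the missing $0$-cells are the diagonal pairs $vv$, giving $N_0=V$. The missing $1$-cells are the pairs $ev$ and $ve$ with $v$ an endpoint of $e$, contributing $N_1=4E$. For the missing $2$-cells $ee'$, one must separate the contribution $e=e'$ (giving $E$ cells) from the contribution of ordered pairs of distinct edges sharing a common vertex; the latter counts to $\sum_{v}\mu(v)(\mu(v)-1)$ since at each vertex $v$ one picks an ordered pair among the $\mu(v)$ edges incident to $v$. Hence $N_2 = E+\sum_{v}\mu(v)(\mu(v)-1)$.

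Plugging in and using $\chi(\Gamma)=V-E$ yields
\begin{equation*}
\chi(D(\Gamma,2)) \; = \; \chi(\Gamma)^2 \; - \; V \; + \; 3E \; - \; \sum_{v}\mu(v)(\mu(v)-1).
\end{equation*}
The final algebraic step is to rewrite this as $\chi(\Gamma)^2+\chi(\Gamma)-\sum_v(\mu(v)-1)(\mu(v)-2)$. This reduces to the identity $-V+3E-\sum_v\mu(v)(\mu(v)-1)=(V-E)-\sum_v(\mu(v)-1)(\mu(v)-2)$, which after cancelling the common factor $(\mu(v)-1)$ on the right amounts to $-2V+4E=2\sum_v(\mu(v)-1)$, i.e.\ the handshake identity $\sum_v\mu(v)=2E$.

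No step here is genuinely hard; the only real obstacle is avoiding double-counting when enumerating the missing $2$-cells (in particular, carefully distinguishing the diagonal case $e=e'$ from the shared-vertex case, and remembering that pairs are ordered so that each unordered pair of distinct edges at a vertex $v$ contributes twice). The final algebraic identity then collapses immediately via $\sum_v\mu(v)=2E$.
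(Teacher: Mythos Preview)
Your proof is correct and follows essentially the same approach as the paper: a direct cell count on $D(\Gamma,2)$ followed by the handshake identity $\sum_v \mu(v)=2E$ to reduce the algebra. The only difference is organizational: the paper counts the cells of $D(\Gamma,2)$ directly (obtaining $V^2-V$, $2E(V-2)$, and $E^2+E-\sum_v\mu(v)^2$), whereas you count the complementary cells in $\Gamma\times\Gamma$ and subtract from $\chi(\Gamma)^2$; these are trivially equivalent, and your version has the minor advantage that the term $\chi(\Gamma)^2$ appears for free.
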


\begin{proof} It is easy to see that the number of vertices of $D=D(\Gamma, 2)$ is $V^2-V$ where $V=|V(\Gamma)|$.

Edges of $D$ are of the form either $ev$ or $ve$ where $v$ is a vertex
of $\Gamma$ and $e$ is an edge of $\Gamma$ not incident to $v$. The
number of edges of $\Gamma$ not incident to $v$ equals $E-\mu(v)$ where $E=|E(\Gamma)|$ is the number of edges of $\Gamma$. Hence the
total number of edges of $D$ is
$$2\cdot \sum_{v\in V(\Gamma)} (E-\mu(v))=2EV-4E= 2E(V-2).$$

The number of 2-dimensional cells of $D$ equals $$E^2-E- \sum_v \mu(v)(\mu(v)-1) = E^2+E -\sum_v \mu(v)^2.$$ Here $E^2$ is the number of all ordered pairs $ee'$ of edges and $E$ is the number of 2-cells of the form $ee$, while the last sum counts cells $ee'$ such that the intersection $e\cap e'$ is a single vertex.

Hence $\chi(D)$ equals
\begin{eqnarray*}
(V^2-V) -(2EV-4E) + \left(E^2+E - \sum_v \mu(v)^2\right) \\
= \, \chi(\Gamma)^2 + \chi(\Gamma) - \sum_v (\mu(v)-1)(\mu(v)-2).\end{eqnarray*}
\end{proof}

Note that Corollary \ref{euler} also follows from a more general theorem of Swiatkowski \cite{Sw} expressing the Euler characteristic of the configuration space $F(X, n)$ of an arbitrary polyhedron; see Corollary 2.7 in \cite{Far1}.

An important role in the subject is played by two well-known Kuratowski graphs $K_{5}$ and $K_{3,3}$.
For these graphs the configuration spaces $D(\Gamma, 2)$ are orientable surfaces of genus 6 and 4 respectively.
Moreover these two are the only graphs for which $D(\Gamma, 2)$ is a surface, see \cite{Ab}.
\begin{figure}[h]
\begin{center}
\resizebox{8cm}{4cm}{\includegraphics[56,336][575,582]{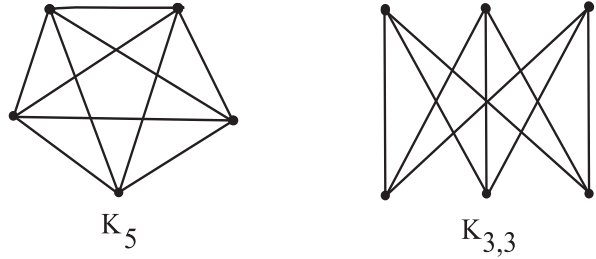}}
\end{center}
\caption{Graphs $K_5$ and $K_{3,3}$.} \label{kkk}
\end{figure}

We will also mention that $F(\Gamma, 2)$ and $D(\Gamma, 2)$ are path-connected assuming that $\Gamma$ is a finite graph which is not homeomorphic to the interval $[0,1]$, see Theorem 2 of \cite{Patty61}. Patty \cite{Patty61} also proved that spaces $F(\Gamma, 2)$ are aspherical, i.e. their homotopy groups $\pi_i(F(\Gamma, 2))$ vanish for $i\ge 2$. A more recent general result of Ghrist \cite{Gr} states that the space $F(\Gamma, n)$ is aspheric for any $n$ and for any finite graph $\Gamma$.

\begin{proposition}\label{epi}
Let $\Gamma$ be a connected finite graph which is not homeomorphic to the circle. Then the inclusion
$\alpha: F(\Gamma, 2) \to \Gamma\times \Gamma$
induces an epimorphism
\begin{eqnarray}
\alpha_\ast: H_1(F(\Gamma, 2)) \to H_1(\Gamma\times \Gamma).
\end{eqnarray}
\end{proposition}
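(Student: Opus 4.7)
The plan is to use the K\"unneth formula together with the fact that $\Gamma$ is connected to write
$$H_1(\Gamma\times\Gamma)\;\cong\;\bigl(H_1(\Gamma)\otimes H_0(\Gamma)\bigr)\oplus \bigl(H_0(\Gamma)\otimes H_1(\Gamma)\bigr)\;\cong\;H_1(\Gamma)\oplus H_1(\Gamma),$$
where the two summands are generated by classes of cycles of the form $\gamma\times\{p\}$ and $\{p\}\times\gamma$, for a $1$-cycle $\gamma\subset\Gamma$ and a point $p\in\Gamma$. It is therefore enough to show that every such generator can be realized by a $1$-cycle already sitting inside $F(\Gamma,2)$, which amounts to choosing the base point $p$ off the image of $\gamma$.

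Next I would fix a spanning tree $T\subset\Gamma$ and use the standard basis of $H_1(\Gamma)$ consisting of the basic cycles $\gamma_e = e\cup P_e$, one for each edge $e\in E(\Gamma)\setminus E(T)$, where $P_e\subset T$ is the unique path in $T$ connecting the endpoints of $e$. The crucial combinatorial step is to verify that under our hypothesis one has $\gamma_e\subsetneq \Gamma$ for every such $e$. Since $\gamma_e\subset T\cup\{e\}$, if $b_1(\Gamma)\ge 2$ there exists another edge $e'\neq e$ outside $T$ whose interior is automatically disjoint from $\gamma_e$, and if $b_1(\Gamma)=1$ the equality $\gamma_e=\Gamma$ would force $P_e=T$, meaning that $T$ is a path with endpoints equal to the two endpoints of $e$, making $\Gamma$ homeomorphic to the circle, contrary to the hypothesis. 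The case $b_1(\Gamma)=0$ is vacuous since $H_1(\Gamma\times\Gamma)=0$.

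Once $\gamma_e\subsetneq \Gamma$ is established, I can pick any $p_e\in\Gamma\setminus\gamma_e$ and form the loop $\gamma_e\times\{p_e\}\subset F(\Gamma,2)$. Its image under $\alpha_\ast$ is $[\gamma_e]\otimes[p_e]$, which corresponds to the class $[\gamma_e]$ in the first $H_1(\Gamma)$--summand of the K\"unneth decomposition. A symmetric construction, using $\{p_e'\}\times\gamma_e$ for some $p_e'\notin\gamma_e$, provides the corresponding generator in the second summand. As $e$ ranges over $E(\Gamma)\setminus E(T)$, the classes $[\gamma_e]$ form a basis of $H_1(\Gamma)$, and so the classes obtained in this way generate $H_1(\Gamma\times\Gamma)$, yielding the required surjectivity of $\alpha_\ast$.

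The only real (and very mild) obstacle is the combinatorial claim that no basic cycle $\gamma_e$ exhausts $\Gamma$; this is precisely the place where the hypothesis \emph{$\Gamma$ is not homeomorphic to the circle} is used. Without it, in the pathological case $\Gamma = S^1$ one would have $b_1(\Gamma)=1$ but $\gamma_e=\Gamma$, leaving no room whatsoever for the second coordinate and indeed making the conclusion false, consistent with the fact that $F(S^1,2)$ is homotopy equivalent to $S^1$ rather than to $S^1\times S^1$.
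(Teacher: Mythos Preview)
Your proof is correct and follows essentially the same approach as the paper: both use the K\"unneth decomposition $H_1(\Gamma\times\Gamma)\cong H_1(\Gamma)\oplus H_1(\Gamma)$ and then realize each generator by a simple loop in one factor paired with a point chosen off that loop, the hypothesis $\Gamma\not\simeq S^1$ being used precisely to guarantee such a point exists. The only difference is cosmetic: the paper works with arbitrary simple loops and homotopes a badly placed constant point off the loop, whereas you use the spanning-tree basis cycles $\gamma_e$ and choose $p_e\notin\gamma_e$ correctly from the outset.
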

\begin{proof} Any one-dimensional homology class of a topological space $X$ can be represented by a loop $S^1\to X$.
Hence Proposition \ref{epi} follows once we know that {\it any pair of continuous maps $\gamma, \gamma': S^1\to \Gamma$ can be changed by a continuous homotopy such that
for any point $z\in S^1$ one has $\gamma(z)\not= \gamma'(z)$}. Then $z\mapsto (\gamma(z), \gamma'(z))$ is a loop with values in $F(\Gamma, 2)$.

Since $H_1(\Gamma\times \Gamma)=H_1(\Gamma\times x_0) \oplus H_1(x_0\times \Gamma)$
where $x_0\in \Gamma$ is a base point and simple loops (i.e. loops without self intersections) generate $H_1(\Gamma)$,
it follows that it is enough to prove the statement of the previous paragraph assuming that one of the curves $\gamma,\gamma'$ is constant and the other is simple.

Let $\gamma: S^1\to \Gamma$ be a simple closed curve and $\gamma'$ a constant curve at a point $x_0\in \Gamma$. Our statement is trivial if
$x_0\not\in \gamma(S^1)$. In the case $x_0\in \gamma(\Gamma)$
we may find a point $x_0'\in \Gamma$ which does not belong to $\gamma(\Gamma)$ (here we use our assumption that $\Gamma$ is connected and is not homeomorphic to the circle). Deforming $\gamma'$ into the constant loop $\tilde \gamma':S^1\to \Gamma$ at $x_0'$ we obtain a deformation of the initial pair of loops to a pair of loops which never occupy the same location in the graph at the same time.
\end{proof}
\begin{corollary} For a
connected finite graph  $\Gamma$ which is not homeomorphic to $S^1$ the inclusion
$\alpha: F(\Gamma, 2) \to \Gamma\times \Gamma$
induces a monomorphism
\begin{eqnarray}
\alpha^\ast: H^1(\Gamma\times \Gamma) \to H^1(F(\Gamma, 2)).
\end{eqnarray}
\end{corollary}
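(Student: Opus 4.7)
The plan is to deduce this from Proposition \ref{epi} by a direct universal coefficient argument. For any space $X$, the UCT gives a natural short exact sequence
$$0 \to {\rm Ext}(H_0(X), \Z) \to H^1(X) \to \Hom(H_1(X), \Z) \to 0,$$
and since $H_0(X)$ is always free abelian the Ext term vanishes, producing a natural isomorphism $H^1(X) \cong \Hom(H_1(X), \Z)$. Applying this to both $F(\Gamma, 2)$ and $\Gamma\times\Gamma$ and using naturality with respect to the inclusion $\alpha$, I obtain a commutative square whose horizontal arrows are these identifications and whose vertical arrows are $\alpha^\ast$ and $\Hom(\alpha_\ast, \Z)$ respectively.

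This reduces the corollary to the injectivity of $\Hom(\alpha_\ast, \Z)$. That step is purely algebraic: by Proposition \ref{epi} the map $\alpha_\ast$ is surjective, so any homomorphism $f: H_1(\Gamma\times\Gamma) \to \Z$ satisfying $f\circ\alpha_\ast = 0$ must vanish identically on $H_1(\Gamma\times\Gamma)$, giving the required injectivity.

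There is no real obstacle here; the only point worth checking explicitly is the vanishing of ${\rm Ext}(H_0, \Z)$, which is automatic from freeness of $H_0$. The hypotheses that $\Gamma$ is connected and not homeomorphic to $S^1$ enter only through Proposition \ref{epi}, where they were needed to produce the surjection $\alpha_\ast$.
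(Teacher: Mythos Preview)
Your proof is correct and is precisely the standard way to spell out this corollary; the paper itself gives no explicit proof, leaving it as an immediate consequence of Proposition~\ref{epi}. Your UCT argument supplies exactly the missing step (dualizing the epimorphism $\alpha_\ast$ via the natural identification $H^1(-)\cong \Hom(H_1(-),\Z)$), so there is nothing to add.
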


\section{Intersection of cycles in a graph}

Let $\Gamma$ be a connected finite graph and let $d: \Gamma\times \Gamma\to \R$ be a metric on $\Gamma$ such that the length
of any edge of $\Gamma$ equals 1. We will also assume that the distance $d(x, y)$ between any two points $ x, y\in \Gamma$ equals the minimal length of
 a path connecting $x$ and $y$.

The complement $\Gamma\times \Gamma- D(\Gamma, 2)$ is an open neighbourhood of the diagonal $\Delta_\Gamma\subset \Gamma\times \Gamma$; we shall denote by $N$ its closure, i.e.
\begin{eqnarray}
N \, = \, N_\Gamma \, =\, \overline{\Gamma\times \Gamma- D(\Gamma, 2)}.\end{eqnarray}
Thus, a pair $(x,y)\in \Gamma\times \Gamma$ lies in $N$
if either $\supp\{x\}\cap \supp\{y\}$ is nonempty, or if at least
one of the points $x$ or $y$ is a vertex and $d(x,y)\leq 2$.

$N=N_\Gamma$ has an obvious cell structure. The 0-dimensional cells of $N$ are ordered
pairs $vw$ of vertices of $\Gamma$ such that $d(v,w)\leq 2$. The 1-dimensional
cells of $N$ are of the form $ev$ and $ve$ where $v$ is a vertex of
$\Gamma$, $e$ is an edge of $\Gamma$ and the distance between $v$ and $e$ is less than or equal to $1$.
The 2-cells of $N$ are of the
form $ee'$ where $e$ and $e'$ are edges of $\Gamma$ with $ e\cap
 {e'}\not= \emptyset$.

In the sequel the following set
\begin{eqnarray}
\partial N \, = \, \partial N_\Gamma = N\cap D(\Gamma, 2)
\end{eqnarray}
plays an important role.
This set is a one-dimensional cell complex (graph) having the following
cells. Zero-dimensional cells of $\partial N$ are of the form $vw$ where $v$
and $w$ are vertices of $\Gamma$ with $d(v,w)=1$ or $2$. One-dimensional cells of $\partial N$ are of two types: $ev$ (horizontal) and
$ve$ (vertical) where $v$ and $e$ are a vertex and an edge of $\Gamma$ and the
distance between $v$ and $ e$ equals 1.

Note that $\partial N$ can be viewed as the configuration space of two particles $x,
y\in\Gamma$ such that $d(x,y)$ lies between $1$ and $2$ and at least one of the
points $x$ and $y$ is a vertex of $\Gamma$.

Next we introduce {\it the intersection form}
\begin{eqnarray}\label{intersection}
I=I_\Gamma: H_1(\Gamma) \otimes H_1(\Gamma) \to H_2(N, \partial N).
\end{eqnarray}
This form measures intersection of cycles in $\Gamma$ and is similar in spirit to the classical intersection forms of cycles in manifolds.
The form (\ref{intersection}) is defined as follows. Consider the inclusion $j: \Gamma\times \Gamma \to (\Gamma\times \Gamma,D(\Gamma, 2))$ and the induced homomorphism $j_\ast$ on the two-dimensional homology. By the K\"unneth theorem $H_2(\Gamma\times \Gamma)$ can be identified with $H_1(\Gamma)\otimes H_1(\Gamma)$; besides, $H_2(\Gamma\times\Gamma, D(\Gamma, 2))$ can be identified with $H_2(N, \partial N)$ by excision.
After these identifications $j_\ast$ turns into homomorphism (\ref{intersection}).

We mention the following obvious properties of $I=I_\Gamma$:

\begin{lemma}\label{lm21} Suppose that two homology classes $z, z'\in H_1(\Gamma)$ can be realised by closed curves $\gamma, \gamma': S^1\to \Gamma$ such that
$\gamma(S^1) \cap \gamma'(S^1) = \emptyset$.
Then $I(z\otimes z')=0.$
\end{lemma}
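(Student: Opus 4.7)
The plan is to show that the Künneth preimage of $z\otimes z'$ inside $H_2(\Gamma\times\Gamma)$ already comes from a cycle supported in $D(\Gamma,2)$, after which vanishing of $I(z\otimes z')$ follows immediately from the long exact sequence of the pair $(\Gamma\times\Gamma,D(\Gamma,2))$.

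First I would form the product map
$$\gamma\times \gamma':\, S^1\times S^1\,\longrightarrow\, \Gamma\times \Gamma,\qquad (s,t)\mapsto (\gamma(s),\gamma'(t)).$$
The hypothesis $\gamma(S^1)\cap\gamma'(S^1)=\emptyset$ says that $\gamma(s)\neq\gamma'(t)$ for every $(s,t)$, so this map factors through $F(\Gamma,2)$. Since $H_1(\Gamma)$ is free abelian (as $\Gamma$ is one-dimensional), the Künneth formula gives a torsion-free identification $H_1(\Gamma)\otimes H_1(\Gamma)\cong H_2(\Gamma\times\Gamma)$, under which $z\otimes z'$ corresponds to the cross product $z\times z'$; by naturality of the cross product this class is represented by $(\gamma\times\gamma')_\ast[S^1\times S^1]$.

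Next I would invoke Lemma \ref{defret}: composing $\gamma\times\gamma'$ with the deformation retraction $h_1\colon F(\Gamma,2)\to D(\Gamma,2)$ yields a map $S^1\times S^1\to D(\Gamma,2)$ that, when postcomposed with the inclusion $D(\Gamma,2)\hookrightarrow \Gamma\times\Gamma$, is homotopic to $\gamma\times\gamma'$. Consequently the class $z\otimes z'\in H_2(\Gamma\times\Gamma)$ lies in the image of the inclusion-induced homomorphism $H_2(D(\Gamma,2))\to H_2(\Gamma\times\Gamma)$. From the long exact homology sequence of the pair $(\Gamma\times\Gamma,D(\Gamma,2))$,
$$H_2(D(\Gamma,2))\longrightarrow H_2(\Gamma\times\Gamma)\xrightarrow{\,j_\ast\,} H_2(\Gamma\times\Gamma, D(\Gamma,2)),$$
exactness forces $j_\ast(z\otimes z')=0$. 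Finally, transporting this equality across the excision isomorphism $H_2(\Gamma\times\Gamma,D(\Gamma,2))\cong H_2(N,\partial N)$ used to define $I$ yields $I(z\otimes z')=0$.

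The only delicate point — and the reason Lemma \ref{defret} is genuinely needed rather than being a cosmetic invocation — is that the assumption $\gamma(S^1)\cap\gamma'(S^1)=\emptyset$ a priori only places $\gamma\times\gamma'$ inside $F(\Gamma,2)$, not inside $D(\Gamma,2)$: two distinct points on $\Gamma$ may well share a common supporting edge, so there is no pointwise reason for $\supp\{\gamma(s)\}\cap\supp\{\gamma'(t)\}$ to be empty. The equivariant retraction furnished by Lemma \ref{defret} is precisely what converts the geometric disjointness of the images into a combinatorial statement at the level of $D(\Gamma,2)$, and it is the only nontrivial ingredient in the argument.
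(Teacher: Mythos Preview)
Your argument is correct and is precisely the justification the paper has in mind when it calls this property ``obvious'': the class $z\times z'$ is carried by a torus lying in $F(\Gamma,2)\simeq D(\Gamma,2)$, hence dies under $j_\ast$ by exactness, and transporting through the excision isomorphism gives $I(z\otimes z')=0$. Your closing remark is also on point: the disjointness of images only guarantees landing in $F(\Gamma,2)$, and Lemma~\ref{defret} (or equivalently the identification $H_\ast(\Gamma\times\Gamma,F(\Gamma,2))\cong H_\ast(\Gamma\times\Gamma,D(\Gamma,2))$) is what bridges the gap to $D(\Gamma,2)$.
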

A partial inverse to this statement is given later in Lemma \ref{inverse}.

\begin{lemma}\label{disjoint} For homology classes $z, z'\in H_1(\Gamma)$ one has
\begin{eqnarray}\label{taumin}
I(z'\otimes z) = - \tau_\ast(I(z\otimes z')),
\end{eqnarray}
where $\tau: (N, \partial N) \to (N, \partial N)$ denotes the canonical involution.
\end{lemma}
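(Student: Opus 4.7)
The plan is to unwind the definition of $I$ and trace how $\tau$ acts through each identification. The key input is the fact that the Künneth isomorphism is $\tau$-equivariant up to the Koszul sign, which supplies the minus sign in (\ref{taumin}).

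First I would observe that the involution $\tau:\Gamma\times\Gamma\to\Gamma\times\Gamma$ preserves the diagonal, hence preserves $D(\Gamma,2)$, and also preserves the neighbourhood $N$ and its boundary $\partial N$. Therefore $\tau$ is a map of pairs both for $(\Gamma\times\Gamma,D(\Gamma,2))$ and for $(N,\partial N)$, and the excision isomorphism $H_2(\Gamma\times\Gamma,D(\Gamma,2))\cong H_2(N,\partial N)$ commutes with $\tau_\ast$. Similarly, the quotient map $j:\Gamma\times\Gamma\to(\Gamma\times\Gamma,D(\Gamma,2))$ is $\tau$-equivariant, so $j_\ast\tau_\ast=\tau_\ast j_\ast$ on $H_2$.

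Next I would analyze the action of $\tau_\ast$ on $H_2(\Gamma\times\Gamma)$ via the Künneth isomorphism $H_2(\Gamma\times\Gamma)\cong H_1(\Gamma)\otimes H_1(\Gamma)$. Since $\tau$ is precisely the coordinate swap, and the Künneth isomorphism intertwines the swap with the graded tensor swap, one gets
\begin{equation*}
\tau_\ast(z\otimes z') \, = \, (-1)^{|z|\cdot|z'|}\, z'\otimes z \, = \, -\, z'\otimes z,
\end{equation*}
the minus sign arising from the Koszul rule because both $z$ and $z'$ have degree $1$. This is the only nonformal input; I would justify it either by citing the standard sign conventions for Künneth or by checking it on explicit singular cycles $\gamma\times\gamma'$ and noting that swapping the order of simplicial factors of a $1\times 1$ cross product introduces a sign.

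With these two ingredients the lemma is immediate: applying $\tau_\ast$ to $I(z\otimes z')=j_\ast(z\otimes z')$ and moving $\tau_\ast$ past $j_\ast$ yields
\begin{equation*}
\tau_\ast\bigl(I(z\otimes z')\bigr) \, = \, j_\ast\bigl(\tau_\ast(z\otimes z')\bigr) \, = \, -\, j_\ast(z'\otimes z) \, = \, -\, I(z'\otimes z),
\end{equation*}
which rearranges to (\ref{taumin}). The main conceptual obstacle is simply the Koszul sign in step two; everything else is naturality of $j_\ast$ and of the excision isomorphism under the equivariant involution $\tau$.
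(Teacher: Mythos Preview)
Your argument is correct and is essentially the same as the paper's: the paper's proof consists of the single observation that $\tau_\ast(z\otimes z')=-z'\otimes z$, and you have simply spelled out the naturality of $j_\ast$ and of excision under $\tau$ that the paper leaves implicit.
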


The minus sign appears in (\ref{taumin}) since $\tau_\ast (z\otimes z') = -z'\otimes z$.

The relevance of the intersection form $I$ to the problem of computing homology groups of $F(\Gamma, 2)$ follows from the following statement.


\begin{proposition}\label{prop6} Let $\Gamma$ be a finite connected graph which is not homeomorphic to the circle. Then
{\rm {(i)}} the group $H_2(F(\Gamma, 2))$ is isomorphic to the kernel of the intersection form
\begin{eqnarray}
H_2(F(\Gamma, 2)) \simeq \ker (I_\Gamma)
\end{eqnarray} and {\rm {(ii)}} the group $H_1(F(\Gamma, 2))$ is isomorphic to the direct sum
\begin{eqnarray}H_1(F(\Gamma, 2)) \simeq \coker(I_\Gamma) \oplus H_1(\Gamma) \oplus H_1(\Gamma).
\end{eqnarray}
 \end{proposition}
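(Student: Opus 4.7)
The plan is to read off both claims from the long exact homology sequence of the pair $(\Gamma\times\Gamma,\,D)$, where $D=D(\Gamma,2)$ is the deformation retract of $F(\Gamma,2)$ supplied by Lemma \ref{defret}. Since $\Gamma\times\Gamma$ is a $2$-dimensional CW complex, $H_3(\Gamma\times\Gamma, D)=0$, so the relevant portion of the long exact sequence is
\begin{equation*}
0 \to H_2(D) \to H_2(\Gamma\times\Gamma) \xrightarrow{j_\ast} H_2(\Gamma\times\Gamma, D) \to H_1(D) \xrightarrow{\alpha_\ast} H_1(\Gamma\times\Gamma).
\end{equation*}

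The next step is to bring in the identifications used in the definition of $I_\Gamma$: by the K\"unneth formula (using that $\Gamma$ is $1$-dimensional and $H_0(\Gamma)$ is torsion-free) one has
\begin{equation*}
H_2(\Gamma\times\Gamma)\simeq H_1(\Gamma)\otimes H_1(\Gamma),\qquad H_1(\Gamma\times\Gamma)\simeq H_1(\Gamma)\oplus H_1(\Gamma);
\end{equation*}
by excising the open complement of $N$, we have $H_2(\Gamma\times\Gamma, D)\simeq H_2(N,\partial N)$; and by construction $j_\ast$ transforms into $I_\Gamma$ under these identifications. Now I invoke Proposition \ref{epi}: the hypothesis that $\Gamma$ is connected and not homeomorphic to $S^1$ ensures $\alpha_\ast$ is surjective, so the displayed exact sequence truncates to a four-term exact sequence
\begin{equation*}
0 \to H_2(D) \to H_1(\Gamma)\otimes H_1(\Gamma) \xrightarrow{I_\Gamma} H_2(N,\partial N) \to H_1(D) \to H_1(\Gamma)\oplus H_1(\Gamma) \to 0.
\end{equation*}

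Part (i) is then immediate: the first three terms give $H_2(F(\Gamma,2))\simeq H_2(D)\simeq\ker(I_\Gamma)$. For part (ii), the last three terms yield a short exact sequence
\begin{equation*}
0 \to \coker(I_\Gamma) \to H_1(D) \to H_1(\Gamma)\oplus H_1(\Gamma) \to 0,
\end{equation*}
and because $H_1(\Gamma)$ is free abelian (a connected graph deformation retracts to a wedge of circles), the quotient on the right is a free $\Z$-module, so the sequence splits. This yields the claimed direct-sum decomposition. There is no substantial obstacle here; the only delicate point is the appeal to Proposition \ref{epi}, which is what truncates the long exact sequence on the right at the correct place. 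Without surjectivity of $\alpha_\ast$, a contribution from $H_1(\Gamma\times\Gamma,D)$ would survive and the formula for $H_1(F(\Gamma,2))$ would acquire an unwanted extra summand, as in fact happens in the excluded case $\Gamma\simeq S^1$.
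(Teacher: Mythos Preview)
Your proof is correct and follows essentially the same route as the paper: both read off the claims from the long exact sequence of the pair $(\Gamma\times\Gamma, D(\Gamma,2))$ (equivalently $(\Gamma\times\Gamma, F(\Gamma,2))$), using K\"unneth, the excision $H_2(\Gamma\times\Gamma, D)\simeq H_2(N,\partial N)$, and Proposition~\ref{epi} to truncate on the right. Your version is slightly more explicit in that you spell out why the resulting short exact sequence for $H_1$ splits (freeness of $H_1(\Gamma)\oplus H_1(\Gamma)$), a point the paper leaves implicit.
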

\begin{proof} Consider the homological sequence of the pair $(\Gamma\times \Gamma, F(\Gamma, 2))$.
If $\alpha$ denotes the embedding $F(\Gamma, 2)\to \Gamma \times \Gamma$ then the induced map $\alpha_\ast$ on one-dimensional homology is onto (by Proposition \ref{epi}). Moreover, one may use Lemma \ref{defret} and excision to identify $H_2(\Gamma\times \Gamma, D(\Gamma, 2))$ with $H_2(N, \partial N)$. This gives the following exact sequence
\begin{eqnarray}\label{secex}
\begin{array}{c}
 0\to H_2(F(\Gamma, 2)) \stackrel{\alpha_\ast}\to H_1(\Gamma) \otimes H_1(\Gamma) \stackrel {I_\Gamma}\to H_2(N, \partial N)\\ \\
 \stackrel \partial \to H_1(F(\Gamma, 2))\stackrel {\alpha_\ast} \to H_1(\Gamma\times\Gamma)\to 0.\end{array}\end{eqnarray}
 This exact sequence clearly implies statements (i) and (ii).
\end{proof}

First we mention the following simple but useful Corollary:

\begin{proposition}\label{prop24}
If $\Gamma$ is a connected finite graph not homeomorphic to $S^1$ then the inclusion $\alpha: F(\Gamma, 2)\to \Gamma\times \Gamma$ induces an epimorphism
\begin{eqnarray}
\alpha^\ast: H^2(\Gamma\times \Gamma) \to H^2(F(\Gamma, 2)).
\end{eqnarray}
\end{proposition}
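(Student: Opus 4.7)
The plan is to extract the surjectivity from the cohomology long exact sequence of the pair $(\Gamma \times \Gamma, F(\Gamma, 2))$ by means of a pure dimension argument. First I would use Lemma \ref{defret} to replace $F(\Gamma, 2)$ with the closed subcomplex $D(\Gamma, 2)$, so that we are working with a CW pair and the relative cohomology is well-behaved. Then, exactly as in the proof of Proposition \ref{prop6}, excision identifies $H^\ast(\Gamma \times \Gamma, D(\Gamma, 2))$ with $H^\ast(N, \partial N)$.

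The key observation is purely dimensional: $\Gamma \times \Gamma$ is a two-dimensional CW complex, and $N$, being a subcomplex, is also at most two-dimensional. Consequently the cellular cochain complex computing $H^\ast(N, \partial N)$ vanishes above degree two, and in particular $H^3(N, \partial N) = 0$.

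With these two inputs in place, the relevant segment of the cohomology long exact sequence of the pair reads
$$H^2(\Gamma \times \Gamma) \stackrel{\alpha^\ast}{\longrightarrow} H^2(F(\Gamma, 2)) \longrightarrow H^3(\Gamma \times \Gamma, F(\Gamma, 2)) \cong H^3(N, \partial N) = 0,$$
so exactness immediately yields that $\alpha^\ast$ is an epimorphism, as required. I do not anticipate any serious obstacle; the argument is essentially the cohomological dual of the degree-count already used in the proof of Proposition \ref{prop6}. The hypothesis that $\Gamma$ is connected and not homeomorphic to $S^1$ does not in fact enter this dimensional argument, and appears to be retained chiefly for parallelism with Proposition \ref{epi} and Proposition \ref{prop6}.
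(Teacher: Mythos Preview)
Your argument is correct and matches the paper's own proof, which simply states that the result ``follows directly from the cohomological exact sequence of the pair $(\Gamma\times\Gamma, F(\Gamma,2))$.'' You have supplied the details the paper omits, namely the reason the connecting map into $H^3$ lands in zero; your observation that the hypothesis on $\Gamma$ is not actually used in the dimension argument is also accurate.
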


This follows directly from the cohomological exact sequence of the pair $(\Gamma\times \Gamma, F(\Gamma, 2))$.

\begin{corollary}\label{ndn} For a connected graph $\Gamma$ which is homeomorphic to neither $S^1$ nor $[0,1]$ one has $H_i(N, \partial N)=0$ for all $i\not= 2$ and the group $H_2(N, \partial N)$ is free abelian of rank
\begin{eqnarray}\label{rkn}
 b_1(\Gamma) -1 + \sum_{v\in V(\Gamma)} \left(\mu(v)-1\right)\left(\mu(v)-2\right) .
\end{eqnarray}
\end{corollary}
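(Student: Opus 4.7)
The plan is to deduce the corollary from the long exact sequence of the pair $(\Gamma \times \Gamma, D(\Gamma, 2))$, which via the excision identification $H_*(\Gamma\times\Gamma, D(\Gamma, 2)) \cong H_*(N, \partial N)$ computes exactly what we need. The dimension-shift trick, together with Proposition \ref{epi} and an Euler characteristic accounting against Corollary \ref{euler}, will do essentially all of the work.

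\textbf{Vanishing in dimensions other than two.} Since $\Gamma \times \Gamma$ and $D(\Gamma, 2)$ are both two-dimensional CW complexes, $H_i(\Gamma \times \Gamma, D) = 0$ for $i \geq 3$ on dimensional grounds. The hypothesis that $\Gamma$ is not homeomorphic to $[0,1]$ guarantees, by Patty's result cited above, that $F(\Gamma, 2)$ and hence $D(\Gamma, 2)$ is path-connected, so $H_0(D) \to H_0(\Gamma \times \Gamma)$ is an isomorphism and $H_0(\Gamma \times \Gamma, D) = 0$. The hypothesis that $\Gamma$ is not homeomorphic to $S^1$ lets us invoke Proposition \ref{epi}: the map $\alpha_* : H_1(D) \to H_1(\Gamma \times \Gamma)$ is surjective, so in the segment
\begin{equation*}
H_1(\Gamma \times \Gamma) \to H_1(\Gamma \times \Gamma, D) \to H_0(D) \to H_0(\Gamma \times \Gamma)
\end{equation*}
the first arrow has zero image and the last arrow is an isomorphism, forcing $H_1(\Gamma \times \Gamma, D) = 0$.

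\textbf{Freeness of $H_2(N, \partial N)$.} Because $D(\Gamma, 2)$ is a CW subcomplex of $\Gamma \times \Gamma$, the group $H_2(\Gamma \times \Gamma, D)$ is the kernel of the cellular boundary $\partial_2$ on the relative chain group generated by those two-cells $ee'$ of $\Gamma \times \Gamma$ whose closures meet the diagonal. As a subgroup of a free abelian group it is itself free abelian.

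\textbf{Computation of the rank.} With $H_i(N, \partial N) = 0$ for $i \neq 2$, the rank equals $\chi(\Gamma \times \Gamma, D) = \chi(\Gamma \times \Gamma) - \chi(D)$. Substituting $\chi(\Gamma \times \Gamma) = \chi(\Gamma)^2$, the expression for $\chi(D) = \chi(F(\Gamma, 2))$ from Corollary \ref{euler}, and $\chi(\Gamma) = 1 - b_1(\Gamma)$ for a connected graph, the $\chi(\Gamma)^2$ terms cancel and the remainder is precisely $b_1(\Gamma) - 1 + \sum_{v} (\mu(v) - 1)(\mu(v) - 2)$. The only non-formal ingredient is the surjectivity supplied by Proposition \ref{epi}; without it the vanishing of $H_1(N, \partial N)$ would genuinely fail, so that is the one point where the hypotheses on $\Gamma$ must be used carefully.
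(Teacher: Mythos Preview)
Your proof is correct and follows essentially the same route as the paper: both use Proposition~\ref{epi} to obtain $H_1(N,\partial N)=0$, observe $H_0=0$ from path-connectedness of $D(\Gamma,2)$, note freeness of $H_2$ from the absence of $3$-cells, and compute the rank as $\chi(\Gamma)^2-\chi(F(\Gamma,2))$ before invoking Corollary~\ref{euler}. The only cosmetic difference is that the paper reads off this rank from the alternating sum of ranks along the exact sequence~(\ref{secex}), whereas you phrase it directly as the relative Euler characteristic $\chi(\Gamma\times\Gamma)-\chi(D)$; these are the same computation.
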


\begin{proof} Proposition \ref{epi} implies that $$H_1(N, \partial N) = H_1(\Gamma\times \Gamma, F(\Gamma,2))=0$$ and obviously $H_0(N, \partial N)=0$.
The exact sequence (\ref{secex}) gives the equation
$$b_2(F) -b_1(\Gamma)^2 + {\rk}\,  H_2(N, \partial N) - b_1(F) + 2b_1(\Gamma)=0$$
where $F$ stands for $F(\Gamma, 2)$. This gives ${\rk} \, H_2(N, \partial N) = \chi(\Gamma)^2 - \chi(F).$ Now, taking into account (\ref{chif}) gives (\ref{rkn}).
The group $H_2(N, \partial N)$ is free abelian since $N$ has no 3-dimensional cells.
\end{proof}

Note that Corollary \ref{ndn} is false in the case when $\Gamma$ is homeomorphic to either $S^1$ or $[0,1]$.

\begin{corollary}
If $\Gamma$ is a tree then $H_2(F(\Gamma,2))=0$ and $$H_1(F(\Gamma, 2))\simeq  H_2(N, \partial N).$$
\end{corollary}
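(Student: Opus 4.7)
The plan is to apply Proposition~\ref{prop6} directly and exploit the fact that for a tree every term in the exact sequence (\ref{secex}) involving $H_1(\Gamma)$ vanishes. Since a tree $\Gamma$ is contractible, $H_1(\Gamma)=0$; hence by the K\"unneth formula $H_1(\Gamma\times\Gamma)=H_1(\Gamma)\oplus H_1(\Gamma)=0$, and of course $H_1(\Gamma)\otimes H_1(\Gamma)=0$. A tree is never homeomorphic to $S^1$, so the hypothesis of Proposition~\ref{prop6} is satisfied (the degenerate case $\Gamma\cong[0,1]$ is trivial, since then $F(\Gamma,2)$ is a disjoint union of two contractible components and $N$ deformation retracts onto $\partial N$, so both sides of the claimed isomorphism vanish).

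Substituting these vanishings into the exact sequence
\begin{eqnarray*}
0\to H_2(F(\Gamma,2))\to H_1(\Gamma)\otimes H_1(\Gamma)\stackrel{I_\Gamma}\to H_2(N,\partial N)\to H_1(F(\Gamma,2))\to H_1(\Gamma\times\Gamma)\to 0
\end{eqnarray*}
immediately gives $\ker(I_\Gamma)=0$ and an isomorphism $\coker(I_\Gamma)\simeq H_2(N,\partial N)$. By Proposition~\ref{prop6}(i) this yields $H_2(F(\Gamma,2))=0$, and by Proposition~\ref{prop6}(ii) one obtains $H_1(F(\Gamma,2))\simeq H_2(N,\partial N)\oplus H_1(\Gamma)\oplus H_1(\Gamma)\simeq H_2(N,\partial N)$, as required.

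There is essentially no obstacle: the corollary is a direct specialization of Proposition~\ref{prop6} to the case where the intersection form has trivial source. The only bookkeeping point worth mentioning is that, although Proposition~\ref{prop6} formally excludes $\Gamma\cong S^1$, this restriction is automatic in the tree setting, so no separate argument is needed.
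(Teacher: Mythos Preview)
Your proof is correct and follows exactly the paper's approach: the paper's own proof is the single sentence ``This follows directly from (\ref{secex}),'' and you have simply spelled out why.

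One small quibble: your parenthetical about $\Gamma\cong[0,1]$ is unnecessary, since Proposition~\ref{prop6} already applies to any connected graph not homeomorphic to $S^1$, and $[0,1]$ is such a graph. Moreover, the claim in that parenthetical that $N$ deformation retracts onto $\partial N$ is false (for $\Gamma=[0,1]$ one has $N=[0,1]^2$ and $\partial N$ consists of two corner points, and a connected space cannot deformation retract onto a disconnected subspace). The conclusion $H_2(N,\partial N)=0$ is nonetheless correct in that case, but you can simply drop the entire parenthetical.
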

This follows directly from (\ref{secex}).

\begin{remark} {\rm The homology of $(N, \partial N)$ is independent of the graph subdivison and is a topological invariant of the graph $\Gamma$. Indeed,
$H_i(N, \partial N)\simeq H_i(\Gamma\times \Gamma, D(\Gamma, 2))\simeq H_i(\Gamma\times\Gamma, F(\Gamma, 2))$.
However the homotopy types of $N$ and $\partial N$ may depend on a particular triangulation of the graph $\Gamma$.
One may prove that {\it if $\Gamma$ is subdivided sufficiently fine such that each simple closed cycle passes through at least 5 edges then the projection on the first coordinate $N \to \Gamma$ is a homotopy equivalence}. We do not use this statement in this paper and leave it without proof.

Let $\Gamma$ be the triangle (graph of the letter $\Delta$). Then $N$ is homeomorphic to $S^1\times S^1$ and the projection $N\to \Gamma$ is not a homotopy equivalence. The projection $N\to \Gamma$ is not a homotopy equivalence also in the case when $\Gamma$ is the boundary of the square $\square$. These are two examples which should be excluded.}
\end{remark}

\section{Computing the intersection form}
First we describe an explicit recipe for computing the intersection form $I$.
Consider the cellular chain complex $C_\ast(N, \partial N)$ of the pair $(N, \partial N)$. Here $C_i(N, \partial N)$ is free abelian group generated by ordered pairs $aa'$ consisting of
closed oriented cells $a, a'$ of $\Gamma$ such that $a\cap a'\not=\emptyset$ and $\dim a + \dim a'=i$ where $i=0,1,2.$
Thus $C_2(N, \partial N)$ has as its basis the set of pairs $ee'$ of oriented edges of $\Gamma$ such that $e\cap e'\not=\emptyset$.
The group $C_1(N, \partial N)$ is freely generated by pairs $ve$ and $ev$ where $v$ is a vertex of $e$. The basis of the group $C_0(N, \partial N)$ is
the set of pairs $vv$ where $v\in V(\Gamma)$. The boundary homomorphism $\partial: C_i(N, \partial N) \to C_{i-1}(N, \partial N)$ acts as follows:
\begin{figure}[h]
\begin{center}
\resizebox{12cm}{4cm}{\includegraphics[14,320][578,545]{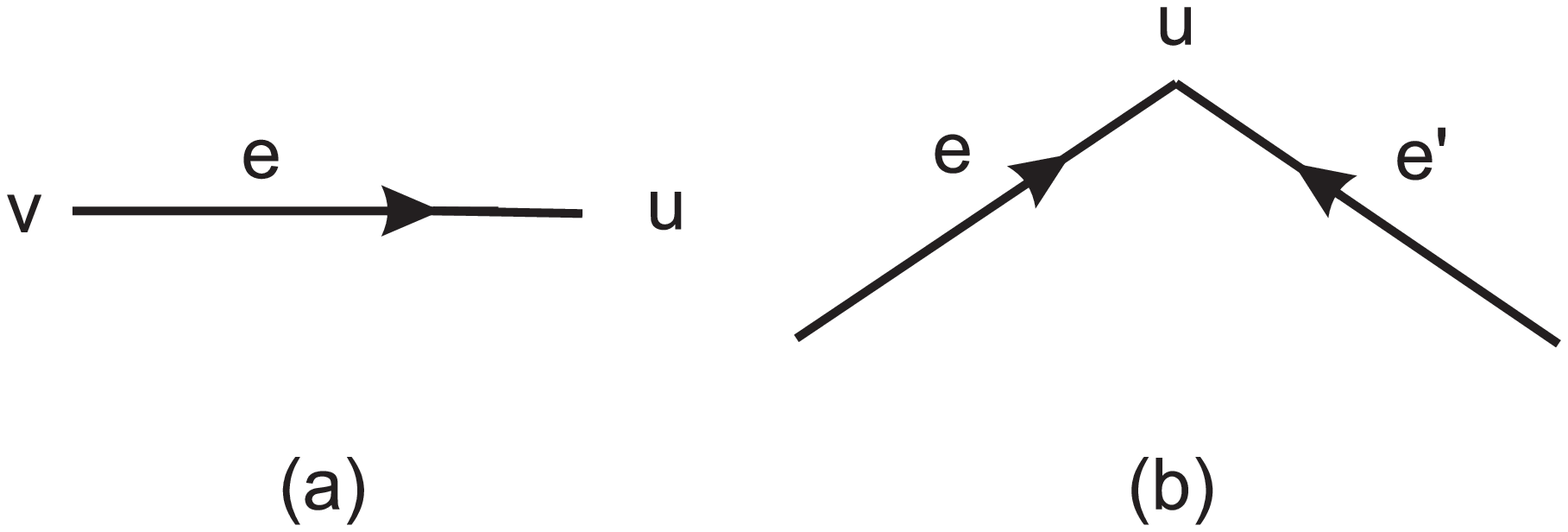}}
\end{center}
\end{figure}
$\partial (ee) = (u-v)e -e(u-v)$ and for $e\not= e'$ one has $\partial (ee') = ue'-eu$; besides $\partial (ue) =\partial (eu) =uu$, $\partial (ve)=\partial (ev)= -vv$ where relations between $e, e', u$ and $v$ are explained on the figure above.

The homology group $H_2(N, \partial N)$ coincides with the kernel of the boundary homomorphism
$\partial: C_2(N, \partial N) \to C_1(N, \partial N)$ and therefore we may view $H_2(N, \partial N)$ as being a subgroup of $C_2(N, \partial N)$.
Hence the intersection form $I$ might be thought of as taking values in the chain group $C_2(N, \partial N)$. Given two cycles $z=\sum n_i e_i$ and
$z'=\sum m_j e'_j$ their intersection $I(z\otimes z')$ equals
\begin{eqnarray}\label{formul}
I(z\otimes z') = \sum_{(i,j) \in A} n_im_j (e_ie'_j)\, \in \, C_2(N, \partial N)
\end{eqnarray}
where $A$ is the set of all pairs $(i,j)$ of indices such that $e_i\cap e'_j\not= \emptyset.$

\begin{lemma}\label{inverse}
For homology classes
$z, z'\in H_1(\Gamma)$ one has $I(z\otimes z')=0$ if and only if $z$ and $z'$ can be realised by cellular chains
$c=\sum n_ie_i$ and $c'=\sum m_je_j'$ which are disjoint, i.e. $e_i\cap e'_j=\emptyset$ for all $i, j$. Here $n_i, m_j \in \Z$ and $n_i\not=0$, $m_j\not=0$.
\end{lemma}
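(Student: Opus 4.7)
The plan is to deduce the lemma directly from the explicit cellular formula (\ref{formul}), combined with the fact that in a one-dimensional complex $\Gamma$ every $1$-cycle admits a unique expression as a cellular chain.

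The ``if'' direction should be immediate. If $z$ and $z'$ are realised by chains $c=\sum n_ie_i$ and $c'=\sum m_je'_j$ with $e_i\cap e'_j=\emptyset$ for every pair $(i,j)$, then the index set $A$ appearing in (\ref{formul}) is empty, and so $I(z\otimes z')=0$.

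For the converse direction, I would proceed as follows. Because $\Gamma$ is one-dimensional we have $C_2(\Gamma)=0$, hence $H_1(\Gamma)=\ker\partial_1\subset C_1(\Gamma)$ and the classes $z,z'$ have \emph{unique} representing chains in $C_1(\Gamma)$. Writing these representatives in reduced form gives well-defined expressions $c=\sum n_ie_i$ and $c'=\sum m_je'_j$ in which all coefficients $n_i,m_j$ are nonzero. Applying (\ref{formul}), the element $I(z\otimes z')$ is then given explicitly as $\sum_{(i,j)\in A}n_im_j(e_ie'_j)\in C_2(N,\partial N)$. Since $N$ has no $3$-cells, the group $H_2(N,\partial N)$ is naturally identified with $\ker\partial_2\subset C_2(N,\partial N)$, and in the latter the cells $e_ie'_j$ with $(i,j)\in A$ are distinct basis vectors. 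Vanishing of $I(z\otimes z')$ therefore forces $n_im_j=0$ for every $(i,j)\in A$. As the $n_i$ and $m_j$ are all nonzero, this can only happen if $A$ itself is empty, i.e. $e_i\cap e'_j=\emptyset$ for all $i,j$, which is exactly the required disjointness.

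I do not anticipate any serious obstacle; the lemma is essentially a direct translation of the definition of $I_\Gamma$ via formula (\ref{formul}). The one point that requires care is the uniqueness of the chain representatives of $z$ and $z'$, which is what prevents coefficients from being shuffled around and producing hidden cancellations in the formula; this uniqueness is precisely the place where the one-dimensionality of $\Gamma$ enters.
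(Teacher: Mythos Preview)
The paper does not supply a proof for this lemma; it is simply stated and followed by the one-line remark that it complements Lemma~\ref{lm21}. Your argument is correct and is exactly the natural one: since $\Gamma$ is one-dimensional, each homology class has a unique cellular chain representative, and since $N$ is two-dimensional, $H_2(N,\partial N)$ sits inside $C_2(N,\partial N)$ as the kernel of $\partial_2$, so the vanishing of the sum in (\ref{formul}) forces each coefficient $n_im_j$ with $(i,j)\in A$ to vanish individually, whence $A=\emptyset$.
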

This lemma complements Lemma \ref{lm21}.

\section{Examples}

\subsection{Example: $\Gamma=K_5$} As the first example consider the case $\Gamma=K_5$. Vertices of $K_5$ will be denoted by the symbols $1, 2, 3, 4, 5$ and the edge connecting vertices $i$ and $j$ will be denoted by $(ij)$, where $i<j$. We assume that each such edge is oriented from $i$ to $j$. The union of all edges emanating from 5 forms a spanning tree. Hence a basis of the homology group $H_1(K_5)$ is formed by the cycles
$$C_{ij} = (ij) +(j5)-(i5), \quad i< j, \quad i,j=1, 2, 3, 4.$$
\begin{figure}[h]
\begin{center}
\resizebox{5cm}{4cm}{\includegraphics[143,361][419,575]{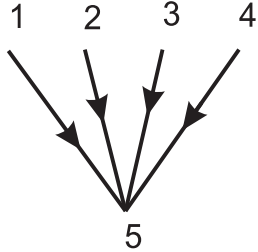}}
\end{center}
\caption{The maximal tree of $K_5$.} \label{k5}
\end{figure}
Computing their intersections using formula (\ref{formul}) we find
$$I(C_{12}\otimes C_{34}) = (25)(45) -(25)(35)-(15)(45) +(15)(35),$$

$$I(C_{13}\otimes C_{24}) = (35)(45) -(35)(25)-(15)(45) +(15)(25),$$

$$I(C_{14}\otimes C_{23}) = (45)(35) -(15)(35)-(45)(25) +(15)(25).$$
Continuing these calculations we obtain that the tensor $x\in H_1(K_5) \otimes H_1(K_5)$ given by
$$x=C_{12}\otimes C_{34} - C_{13}\otimes C_{24} + C_{14}\otimes C_{23}+ C_{34}\otimes C_{12} -C_{24}\otimes C_{13} +C_{23}\otimes C_{14}$$
satisfies $I(x)=0$. It
represents a nonzero and indivisible homology class in $H_2(F(K_5, 2))$, in view of exact sequence (\ref{secex}). Note that $x$ can be written in the form
\begin{eqnarray}
x= \sum_{(ijkl)} \epsilon_{(ijkl)} C_{ij}\otimes C_{kl},
\end{eqnarray}
where $(ijkl)$ runs over all permutations of indices $1,2,3, 4$ such that $i<j$ and $k<l$ and $ \epsilon_{(ijkl)}=\pm 1$ denotes the sign of the permutation.

Our notation $(ijkl)$ stands for the permutation $$(ijkl) = \left(
\begin{array}{cccc} 1& 2& 3& 4\\ i&j&k&l\end{array}\right).$$

We know that $F(K_5, 2)$ is homotopy equivalent to orientable surface of genus $6$ and hence $H_2(F(K_5, 2))\simeq \Z$.
In the case $\Gamma=K_5$ the groups appearing in exact sequence (\ref{secex}) have the following ranks: $\rk H_1(K_5)=6$, $\rk H_2(N, \partial N) = 35$, $\rk H_2(F(K_5, 2))= 1$ and $\rk H_1(F(K_5, 2))= 12$. Note that the intersection form $I: H_1(K_5)\otimes H_1(K_5) \to H_2(N, \partial N)$ is
 an epimorphism
in this case.

\subsection{Example: $\Gamma=K_{3,3}$} Consider now the case when $\Gamma=K_{3,3}$. We denote the vertices of the graph as shown on Figure \ref{k333}: upper vertices are labeled $a_1, a_2, a_3$ and lower vertices are $b_1, b_2, b_3$.
\begin{figure}[h]
\begin{center}
\resizebox{10cm}{4.6cm}{\includegraphics[60,533][542,777]{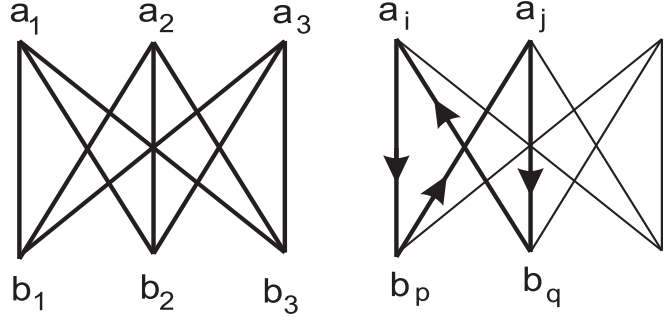}}
\end{center}
\caption{Graph $K_{3,3}$ and cycle $B^{ij}_{pq}$.} \label{k333}
\end{figure}
The edges of $K_{3,3}$ will be oriented from up to down; the edge starting at $a_i$ and ending at $b_p$ is denoted $(a_ib_p)$. For $i, j, p, q\in \{1, 2, 3\}$ with $i\not= j$ and $p\not=q$ consider the following cycle (and its homology class)
\begin{eqnarray}\label{cijpq}
B^{ij}_{pq} = (a_ib_p)-(a_jb_p) +(a_jb_q)-(a_ib_q)\in H_1(K_{3,3}).
\end{eqnarray}
Consider the tensor
$x\in H_1(K_{3,3})\otimes H_1(K_{3,3})$ given by the formula
\begin{eqnarray}\label{sum11}
x= \sum \epsilon_{(ijk)}\epsilon_{(pqr)}B^{ij}_{pq}\otimes B^{ik}_{pr}.
\end{eqnarray}
In this sum the symbols $(ijk)$ and $(pqr)$ run over all permutations of the indices $1,2, 3$ and $ \epsilon_{(ijk)}=\pm 1$ and $\epsilon_{(pqr)}=\pm 1$ denote the signs of these permutations.

We claim that {\rm {(i)}} $I(x)=0$ while {\rm {(ii)}} $x\not= 0$.
To prove (i) we note that
$I(B^{ij}_{pq}\otimes B^{ik}_{pr})$ (viewed as an element of $C_2(N, \partial N)$) equals
\begin{eqnarray*}\begin{array}{l}
(a_ib_p)(a_ib_p) - (a_ib_p)(a_kb_p) -(a_ib_p)(a_ib_r) - (a_jb_p)(a_ib_p) + \\
 (a_jb_p)(a_kb_p) -(a_ib_q)(a_ib_p) +(a_ib_q)(a_ib_r).
 \end{array}
\end{eqnarray*}
Each of these terms has the form $\pm (a_\alpha b_\beta)(a_\gamma b_\delta)$ where $\alpha, \beta, \gamma, \delta \in \{1, 2,  3\}$ and either $\alpha = \gamma$ or $\beta= \delta$.
In the case when $\alpha=\gamma$, permuting the remaining two indices $i, j, k \in \{1, 2, 3\}-\{\alpha\}$ we also obtain this term in the sum
$$I(x)\,  = \, \sum \epsilon_{(ijk)}\epsilon_{(pqr)}I(B^{ij}_{pq}\otimes B^{ik}_{pr})$$
but with the opposite sign. Similar arguments apply in all other cases.
Hence $I(x)=0$.

To prove (ii) we construct homomorphisms $f, g: H_1(K_{3,3})\to \Z$ such that $(f\otimes g)(x)\not=0$. Consider the maximal tree $T\subset K_{3,3}$ which is the union of all edges emanating from the vertices $a_3$ and $b_3$.
The remaining 4 edges $(a_1b_1)$, $(a_1b_2)$, $(a_2b_1)$, $(a_2b_2)$ label a basis of $H_1(K_{3,3})$. We denote by
$f: H_1(K_{3,3})\to \Z$ the homomorphism which equals 1 on the class represented by $(a_1b_2)$ and vanishes on the homology classes corresponding to 3
other edges. Explicitly, the value of $f$ on classes (\ref{cijpq}) is given by
\begin{eqnarray}
f(B^{ij}_{pq}) =\left\{
\begin{array}{ll}
1& \mbox{if $(i,p)=(1,2)$ or $(j,q)=(1,2)$},\\
-1 & \mbox{if $(j, p)=(1,2)$ or $(i,q)=(1,2)$},\\
0, & \mbox{otherwise}.
\end{array}
\right.
\end{eqnarray}
Similarly, define $g:H_1(K_{3,3})\to \Z$  to be the homomorphism which equals 1 on the class represented by $(a_2b_1)$ and vanishes on the homology classes corresponding to $(a_1b_1)$, $(a_1b_2)$, $(a_2b_2)$.
The value of $g$ on classes (\ref{cijpq}) is given by
\begin{eqnarray}
g(B^{ij}_{pq}) =\left\{
\begin{array}{ll}
1& \mbox{if $(i,p)=(2,1)$ or $(j,q)=(2,1)$},\\
-1 & \mbox{if $(j, p)=(2,1)$ or $(i,q)=(2,1)$},\\
0, & \mbox{otherwise}.
\end{array}
\right.
\end{eqnarray}

The number $(f\otimes g)(x)\in \Z$ can be represented in the form
\begin{eqnarray}\label{rhs}
(f\otimes g)(x) = \, \sum \epsilon_{(ijk)}\epsilon_{(pqr)} f(B^{ij}_{pq})  g(B^{ik}_{pr}) = \sum_{i, p=1}^ 3 A^i_p
\end{eqnarray}
where $A^i_p$ denotes the sum of terms appearing in (\ref{rhs}) with fixed indices $i$ and $p$.
For example,
$$A^1_1 = \sum \epsilon_{(1jk)}\epsilon_{(1q r)} f(B^{1j}_{1q})g(B^{1k}_{1r})$$
where $j,k,q,r=2,3$ and $j\not=k$, $q\not=r$. It it easy to see that $A_1^1$ contains only one nonzero term corresponding to
$j=3$, $k=2$,
$q=2$, $r=3$ and that $A^1_1=-1. $ Analyzing all 8 remaining possibilities one obtains that $A^i_p=-1$ for all $i,p=1,2,3$. Hence
$(f\otimes g)(x)=-9.$

We know that $F(K_{3,3}, 2)$ is homotopy equivalent to orientable surface of genus $4$ and hence $H_2(F(K_{3,3}, 2))\simeq \Z$.
 The groups appearing in exact sequence (\ref{secex}) have
in the case $\Gamma=K_{3,3}$
the following ranks: ${\rk} H_1(K_{3,3})=4$, $\rk H_2(N, \partial N) = 15$, $\rk H_2(F(K_{3,3}, 2))= 1$ and $\rk H_1(F(K_{3,3}, 2))= 8$. Note that the intersection form $I: H_1(K_{3,3})\otimes H_1(K_{3,3}) \to H_2(N, \partial N)$ is an epimorphism in this case as well.

\subsection{Discussion.} The two previous examples suggest that for all {\it \lq\lq well grown\rq\rq} graphs $\Gamma$ one may expect the intersection form $$I: H_1(\Gamma)\otimes H_1(\Gamma)\to H_2(N, \partial N)$$ to be an epimorphism or to have a small cokernel. If $I$ is surjective one has the following simple formulae
for the Betti numbers of the configuration space $F=F(\Gamma, 2)$:
\begin{eqnarray*}\begin{array}{l}
b_1(F) = 2b_1(\Gamma), \\ \\
b_2(F) =
 b_1(\Gamma)^2 -b_1(\Gamma) +1 - \sum_{v\in V(\Gamma)} \left(\mu(v)-1\right)\left(\mu(v)-2\right).
 \end{array}
\end{eqnarray*}
What are geometric conditions on the graph $\Gamma$ implying the surjectivity of the intersection form $I$? The case of planar graphs will be discussed in detail later; we will see that $I$ is never surjective for planar graphs however its cokernel has rank one under some quite general assumptions (see Theorem \ref{thm3}).

\section{Scalar intersection forms}

The homology of $(N, \partial N)$ can be computed using the cellular chain complex of $(N, \partial N)$.
In view of Corollary \ref{ndn} for $\Gamma$ not homeomorphic to $S^1$, $[0,1]$ one has the exact sequence
$$0\to C^0(N, \partial N) \to C^1(N, \partial N) \stackrel \delta\to C^2(N, \partial N) \to H^2(N, \partial N) \to 0$$
where $C^i(N, \partial N)$ is the dual of the free abelian group generated by the oriented cells of dimension $i$ of $N$ lying in $N-\partial N$.
Fix an orientation of each edge of $\Gamma$. Then $C^2(N, \partial N)$ can be viewed as the set of functions $f: E(\Gamma)\times E(\Gamma) \to \Z$ associating an integer to an ordered pair
$ee'$ of {\it oriented} edges of $\Gamma$ such that $e\cap e'\not=\emptyset$; here the case $e=e'$ is not excluded. Similarly, an element of
$C^1(N, \partial N)$ is a pair of functions
$$g: V(\Gamma)\times E(\Gamma) \to \Z, \quad h: E(\Gamma)\times V(\Gamma)  \to \Z,$$
such that $g(ve)$ and $h(ev)$ vanish assuming that $v\notin \partial e$. The coboundary map $\delta: C^1(N, \partial N) \to C^2(N,\partial N)$ is given by
the formula $$\delta(g,h)(ee')=g((\partial e) \cdot e') - h(e\cdot \partial e').$$

Fix a pair of oriented edges $ee'$ of $\Gamma$ with $e\cap e'\not=\emptyset$ and consider the cohomology class
\begin{eqnarray}\{f_{ee'}\} \in H^2(N, \partial N)\end{eqnarray} represented by the
delta-function cocycle  $f_{ee'}\in C^2(N, \partial N)$,
$$f_{ee'}(e_1e'_1) =\left\{
\begin{array}{ll}
1, & \mbox{if $e_1=e$ and $e_1'=e'$}, \\ \\
0, & \mbox{otherwise}.
\end{array}
\right.
$$
Note that the cohomology classes $\{f_{ee'}\}$ generate $H^2(N, \partial N)$ and there are some linear relations between them.

One may use cohomology classes $\{f_{ee'}\}$ to define the scalar intersection forms
\begin{eqnarray}
I_{ee'}: H_1(\Gamma) \otimes H_1(\Gamma) \to \Z.
\end{eqnarray}
\begin{definition}
For $z, z'\in H_1(\Gamma)$ we set
\begin{eqnarray}
I_{ee'}(z\otimes z') = \langle \{f_{ee'}\}, I(z\otimes z')\rangle \in \Z.
\end{eqnarray}
\end{definition}
In other words, $I_{ee'}(z\otimes z')$ is the evaluation of the cohomology class $\{f_{ee'}\}$ on the full intersection $I(z\otimes z')$. It is clear that
 the scalar intersection form can be explicitly computed as follows:
 \begin{lemma}\label{calc}
 Assume that homology classes $z, z'\in H_1(\Gamma)$ are presented as linear combinations
 $z=\sum n_i e_i$, and $z'=\sum m_j e_j$
 of distinct oriented edges of $\Gamma$. Then $I_{ee'}(z\otimes z') =n_i m_j$ where $e_i=e$ and $e_j=e'$.
 \end{lemma}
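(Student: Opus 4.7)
The plan is to unwind both sides of the claimed equality directly from their chain-level definitions and reduce the statement to identification of a single coefficient.

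First, by formula (\ref{formul}), the intersection $I(z\otimes z')$ is represented in $C_2(N,\partial N)$ by the explicit cycle
\begin{eqnarray*}
I(z\otimes z') \, = \, \sum_{(k,l)\in A} n_k m_l\, (e_k e'_l),
\end{eqnarray*}
where $A$ is the set of pairs of indices $(k,l)$ for which $e_k\cap e'_l\neq\emptyset$. Since $C_2(N,\partial N)$ has no $3$-cells above it, this chain is automatically a cycle and represents a well-defined class in $H_2(N,\partial N)$.

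Next, by its definition the cochain $f_{ee'}\in C^2(N,\partial N)$ is the delta function: it takes the value $1$ on the single basis element $ee'$ and vanishes on every other ordered pair of oriented edges. Being a top-dimensional cochain, it is automatically a cocycle, so the pairing $\langle \{f_{ee'}\},\,-\rangle$ is well-defined on $H_2(N,\partial N)$ and may be computed by evaluation on any cycle representative.

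Therefore,
\begin{eqnarray*}
I_{ee'}(z\otimes z') \, = \, \langle \{f_{ee'}\},\, I(z\otimes z')\rangle \, = \, \sum_{(k,l)\in A} n_k m_l\, f_{ee'}(e_k e'_l),
\end{eqnarray*}
and only the single term with $e_k=e$ and $e'_l=e'$ contributes. Because the edges $e_k$ appearing in the representation $z=\sum n_k e_k$ are by assumption pairwise distinct, and similarly for $z'=\sum m_l e'_l$, the indices $i$ and $j$ with $e_i=e$ and $e'_j=e'$ are uniquely determined (if they exist), giving exactly the value $n_i m_j$. If either edge does not occur in the corresponding representation, the relevant coefficient is interpreted as zero, and the formula still reads $n_i m_j=0$.

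There is essentially no obstacle here: the only care needed is to note that $I(z\otimes z')$ is defined at the chain level by (\ref{formul}) (not merely up to boundaries), and that the delta cochain $f_{ee'}$ is a genuine cocycle so that the Kronecker pairing is computed on the nose. The case $e=e'$ is covered by the same formula, since ordered pairs of coincident edges are permitted as basis elements of $C_2(N,\partial N)$.
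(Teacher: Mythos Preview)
Your proof is correct and is exactly the direct unwinding of definitions that the paper has in mind; indeed the paper states the lemma without proof, prefacing it with ``It is clear that\ldots'', and your computation is precisely the intended one-line justification via formula~(\ref{formul}) and the delta-cochain definition of $f_{ee'}$.

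One small imprecision: your sentence ``Since $C_2(N,\partial N)$ has no $3$-cells above it, this chain is automatically a cycle'' conflates two different facts. The absence of $3$-cells implies that $H_2(N,\partial N)$ embeds into $C_2(N,\partial N)$ (no $2$-boundaries), which is what allows the pairing to be computed on the nose. But it does not by itself make an arbitrary $2$-chain a cycle; the chain in (\ref{formul}) is a cycle because it is the image of the cycle $z\times z'$ under the relevant chain map. This does not affect your argument, since the paper already identifies $H_2(N,\partial N)$ with the kernel of $\partial$ in $C_2(N,\partial N)$ and asserts that (\ref{formul}) lies there.
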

 Hence the intersection form $I_{ee'}$ counts instances when the first cycle $z$ passes along $e$ and the second cycle $z'$ passes along $e'$.
 The following Corollary follows either from Lemma \ref{calc} or from formula (\ref{taumin}).
 \begin{corollary} One has $I_{ee'}(z\otimes z') = I_{e'e}(z'\otimes z)$.
 \end{corollary}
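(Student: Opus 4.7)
My plan is to derive the identity as an immediate application of Lemma \ref{calc}. Choose representations
\[
z \;=\; \sum_i n_i\, e_i, \qquad z' \;=\; \sum_j m_j\, e_j,
\]
as integer linear combinations of distinct oriented edges of $\Gamma$ (with all $n_i, m_j$ nonzero). Lemma \ref{calc} applied to the pair $(z,z')$ and the ordered edge pair $ee'$ says that $I_{ee'}(z\otimes z')=n_a m_b$, where the indices are determined uniquely by the conditions $e_a=e$ and $e_b=e'$ (and equals $0$ if $e$ or $e'$ does not occur in the respective cycle). Applying the same lemma with the roles of the two cycles swapped and to the pair $e'e$ gives $I_{e'e}(z'\otimes z) = m_b n_a$ for precisely the same indices $a,b$. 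Commutativity of multiplication in $\Z$ finishes the argument.

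As an alternative route, one could start from (\ref{taumin}) and evaluate the cohomology class $\{f_{e'e}\}$ on $-\tau_\ast I(z\otimes z')$. For this one needs the identity $\tau^\ast\{f_{e'e}\} = -\{f_{ee'}\}$ in $H^2(N,\partial N)$, where the sign arises because the swap $\tau$ reverses the product orientation on each $2$-cell $ee'$ of $\Gamma\times\Gamma$. The two minus signs cancel and one recovers the stated equality. Since this second route requires slightly more bookkeeping with orientations, I would record the proof via Lemma \ref{calc}.

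There is no real obstacle: the corollary is essentially a restatement of the symmetry $n_a m_b = m_b n_a$. The only point worth flagging explicitly in the write-up is that the indices $a,b$ are the same on both sides, which follows from the hypothesis $e\cap e'\neq\emptyset$ being a symmetric condition and from the fact that the representing chains $\sum n_i e_i$ and $\sum m_j e_j$ are chosen once and for all, independent of the order in which the two cycles are paired.
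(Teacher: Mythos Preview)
Your proof is correct and follows exactly the two routes the paper indicates: the corollary is stated to follow ``either from Lemma \ref{calc} or from formula (\ref{taumin}),'' and you have spelled out both derivations accurately. Nothing further is needed.
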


 For future reference we also state:
 \begin{lemma} A tensor $x\in H_1(\Gamma)\otimes H_1(\Gamma)$ satisfies $$I(x)=0\in H_2(N, \partial N)$$ if and only if $I_{ee'}(x)=0$ for every pair of oriented edges $e$, $e'$ with $e\cap e'\not=\emptyset$.
 \end{lemma}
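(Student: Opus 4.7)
The forward direction is essentially a tautology: by the very definition of $I_{ee'}$ in the preceding Definition, $I_{ee'}(x) = \langle \{f_{ee'}\}, I(x)\rangle$, and if $I(x) = 0 \in H_2(N,\partial N)$ then this pairing vanishes for every choice of $e, e'$.

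For the reverse implication my plan is to exploit the fact that, because $N$ has no cells of dimension exceeding $2$, the homology group $H_2(N,\partial N)$ coincides with the cycle group $Z_2(N,\partial N) = \ker[\partial\colon C_2(N,\partial N)\to C_1(N,\partial N)]$ and thus sits inside $C_2(N,\partial N)$ as a subgroup (this was already used in the proof of Corollary \ref{ndn}). Consequently $I(x)$ may be viewed as an honest integral chain, and by the explicit formula (\ref{formul}) it expands uniquely as
\begin{equation*}
I(x) = \sum_{e\cap e'\neq\emptyset} c_{ee'}\cdot (ee') \in C_2(N,\partial N),
\end{equation*}
where the sum runs over the standard basis of $C_2(N,\partial N)$ indexed by ordered pairs of oriented edges of $\Gamma$ whose closures meet.

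The cochain $f_{ee'}\in C^2(N,\partial N)$ is by definition the element of the dual basis, so $\langle f_{ee'},\, ee_1'\rangle = \delta_{(ee'),(e_1e_1')}$ at the chain level. Since $C^3(N,\partial N)=0$, every cochain is automatically a cocycle, and for a cycle $I(x)\in Z_2(N,\partial N)$ the Kronecker pairing with the cohomology class $\{f_{ee'}\}$ agrees with the chain-level evaluation. Hence
\begin{equation*}
I_{ee'}(x) \;=\; \langle \{f_{ee'}\},\, I(x)\rangle \;=\; f_{ee'}\!\left(\sum c_{e_1e_1'}(e_1e_1')\right) \;=\; c_{ee'}.
\end{equation*}
Thus the hypothesis $I_{ee'}(x)=0$ for every admissible pair forces every coefficient $c_{ee'}$ to vanish, so $I(x) = 0$ already as a chain in $C_2(N,\partial N)$, hence a fortiori as a homology class.

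No substantial obstacle is expected: the entire content of the lemma is the duality identification between the chain $I(x)$ and its coordinates with respect to the natural basis of $C_2(N,\partial N)$. The only subtle point worth making explicit in the write-up is the appeal to the vanishing of cells above dimension two, which is what allows us to identify the cohomology pairing with the chain-level evaluation of $f_{ee'}$ on $I(x)$.
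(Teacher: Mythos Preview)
Your argument is correct and is exactly what the paper has in mind: the paper's own proof is the single line ``This follows directly from the previous discussion,'' and that discussion is precisely the identification of $H_2(N,\partial N)$ with a subgroup of $C_2(N,\partial N)$ (since there are no $3$-cells) together with the observation that $f_{ee'}$ reads off the $ee'$-coefficient of a chain. You have simply unpacked that sentence in full detail.
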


 This follows directly from the previous discussion.

\section{Planar graphs, I}

The following statement is one of the major results of this article.

\begin{theorem}\label{htwo} Let $\Gamma\subset \R^2$ be a planar graph and let $U_0, U_1, \dots, U_r$ be the connected components of the complement $\R^2-\Gamma$
with $U_0$ denoting the unbounded component. Then the second Betti number of $F(\Gamma, 2)$ equals the number of ordered pairs $(i,j)$ where $i, j\in \{0, 1, \dots, r\}$ are such that $$\bar U_i\cap \bar U_j=\emptyset.$$ For any such pair $(i,j)$ consider the torus $T_{ij}^2\subset F(\Gamma, 2)$ formed by the configurations where the first particle runs along the boundary of $U_i$ and the second particle runs along the boundary of $U_j$ respectively. The fundamental classes $[T^2_{ij}]\in H_2(F(\Gamma, 2))$ of these tori  freely generate $H_2(F(\Gamma, 2))$.
\end{theorem}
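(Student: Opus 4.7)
The approach is to combine Proposition \ref{prop6}(i), which identifies $H_2(F(\Gamma, 2))$ with $\ker(I_\Gamma) \subseteq H_1(\Gamma) \otimes H_1(\Gamma)$ via $\alpha_\ast$, with the face-cycle basis of $H_1(\Gamma)$ supplied by the planar embedding. Fix an orientation of $\R^2$ and set $z_k := [\partial U_k] \in H_1(\Gamma)$, oriented so that $U_k$ lies on the left of its boundary. Each edge of $\Gamma$ appears in exactly two face cycles with opposite signs, so $\sum_{k=0}^r z_k = 0$ and any $r$ of the $z_k$'s form a $\Z$-basis of $H_1(\Gamma)$.

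First, I exhibit the torus classes together with a sharp vanishing criterion for $I_\Gamma$ on simple tensors of face cycles. For each $(i,j) \in S$, the cycles $\partial U_i$ and $\partial U_j$ are disjoint subsets of $\Gamma$, so $T^2_{ij} := \partial U_i \times \partial U_j$ lies in $D(\Gamma, 2) \subseteq F(\Gamma, 2)$. Under the K\"unneth identification $H_2(\Gamma \times \Gamma) \cong H_1(\Gamma) \otimes H_1(\Gamma)$, we have $\alpha_\ast[T^2_{ij}] = z_i \otimes z_j$, and Lemma \ref{lm21} gives $z_i \otimes z_j \in \ker I_\Gamma$. Next, using formula (\ref{formul}) together with the embedding $H_2(N,\partial N) \subseteq C_2(N,\partial N)$, I read off that $I(z_i \otimes z_j) = 0$ if and only if no pair of edges $e \subseteq \partial U_i$, $e' \subseteq \partial U_j$ with $e \cap e' \neq \emptyset$ exists, which amounts to $\partial U_i \cap \partial U_j = \emptyset$. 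For distinct $i, j \in \{0, 1, \dots, r\}$ this is equivalent to $\bar U_i \cap \bar U_j = \emptyset$, while for $i = j$ we have $\bar U_i \cap \bar U_i = \bar U_i \neq \emptyset$ and $I(z_i \otimes z_i) \neq 0$.

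The heart of the proof is showing that $\{z_i \otimes z_j : (i,j) \in S\}$ is a $\Z$-basis of $\ker I_\Gamma$. For linear independence, expand each $z_0 \otimes z_j$, $z_i \otimes z_0$ in the basis $\{z_a \otimes z_b : 1 \leq a, b \leq r\}$ via $z_0 = -\sum_{k \geq 1} z_k$; a vanishing combination $\sum_{(i,j) \in S} c_{ij}(z_i \otimes z_j) = 0$ produces, coefficient by coefficient, a linear system that one solves by induction on distance in the dual graph on $\{0, 1, \dots, r\}$ whose edges are the pairs outside $S$. Connectedness of $\Gamma$ makes this dual graph connected and the iterative elimination of outermost variables forces every $c_{ij}$ to vanish. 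For spanning, I argue via a rank count: $\im(I_\Gamma)$ is generated by the nonzero elements $I(z_a \otimes z_b)$ for $a, b \in \{1, \dots, r\}$ with $(a,b) \notin S$, subject to the $2\alpha$ relations $\sum_{i=1}^r I(z_i \otimes z_j) = 0$ for each $j$ with $(0,j) \in S$ and their symmetric counterparts (where $\alpha$ denotes the number of such $j$). Combined with the bookkeeping identity $|S| = |S \cap \{1, \dots, r\}^2| + 2\alpha$, this yields $\rk(\im I_\Gamma) = r^2 - |S|$, hence $\rk(\ker I_\Gamma) = |S|$, so the $|S|$ linearly independent torus classes exhaust $H_2(F(\Gamma, 2))$.

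The main obstacle is verifying independence of the $2\alpha$ image-relations in $H_2(N, \partial N)$. A 2-cell $(e, e')$ of $N$ appears in the support of $I(z_i \otimes z_j)$ only when $i$ is one of the two faces adjacent to $e$ and $j$ is one of the two faces adjacent to $e'$. Consequently the relation $\sum_{i=1}^r I(z_i \otimes z_j) = 0$ lives on cells $(e, e')$ with $e' \subseteq \partial U_j$, and the relations indexed by different values of $j$ (with $(0,j) \in S$) have essentially disjoint supports in the $e'$-coordinate; a symmetric statement holds for the column relations $\sum_j I(z_i \otimes z_j) = 0$ indexed by $(i,0) \in S$. Tracking these supports carefully and using planarity to rule out spurious cancellations across the two families establishes the required independence and completes the proof.
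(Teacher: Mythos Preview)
Your argument has a genuine gap in the spanning step. You assert that $\im(I_\Gamma)$ is generated by the nonzero elements $I(z_a\otimes z_b)$ \emph{subject to exactly the $2\alpha$ relations} $\sum_i I(z_i\otimes z_j)=0$ (for $(0,j)\in S$) and their transposes, and then devote your effort to proving these $2\alpha$ relations are independent. But independence of the relations gives $\rk(\text{relations})\ge 2\alpha$, hence $\rk(\im I_\Gamma)\le r^2-|S|$ and therefore $\rk(\ker I_\Gamma)\ge |S|$ --- the \emph{same} inequality you already obtained from linear independence of the torus classes. (Indeed, your $2\alpha$ relations are literally the images under $I_\Gamma$ of the kernel elements $z_0\otimes z_j$ and $z_i\otimes z_0$, so their independence is just part of the linear-independence claim in disguise.) What you actually need for spanning is the opposite inequality: that there are \emph{no further} relations among the $I(z_a\otimes z_b)$, i.e.\ $\rk(\text{relations})\le 2\alpha$. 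This is equivalent to the statement that every element of $\ker I_\Gamma$ is a combination of the $|S|$ torus classes, which is precisely the heart of the theorem and is nowhere addressed in your proposal.

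The paper's proof attacks this missing step directly. Given $x=\sum_{i,j=1}^r x_{ij}\,z_i\otimes z_j$ with $I(x)=0$, it shows one can find ``weights'' $a_i,b_j$ with $x_{ij}=a_i+b_j$ whenever $\bar U_i\cap\bar U_j\ne\emptyset$, and $a_i=b_i=0$ whenever $\bar U_i\cap\bar U_0\ne\emptyset$; this rewrites $x$ as a combination of the tensors $z_i\otimes z_0$, $z_0\otimes z_j$, and $z_i\otimes z_j$ with $(i,j)\in S$. The weights are produced by an ``analytic continuation'' across edges: the edge equation $I_{ee}(x)=0$ gives the compatibility $x_{UU}+x_{VV}=x_{UV}+x_{VU}$ needed to propagate $a_U$ to an adjacent face $V$, and the vertex equations $I_{ee'}(x)=0$ guarantee there is no monodromy when continuing around a vertex. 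Since loops around vertices generate $\pi_1(S^2\setminus V(\Gamma))$, the continuation is globally well-defined; the boundary normalisation then pins down the weights uniquely. This is where planarity does real work, and it is exactly the content your rank-count leaves unproven.
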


{\bf Remarks:}
(1) the tori $T^2_{ij}$ and $T^2_{ji}$ which appear in Theorem \ref{htwo} are disjoint and have to be counted separately. Hence, the second Betti number $b_2(F(\Gamma, 2))$ is even for any planar graph $\Gamma$.

(2) The involution $\tau: F(\Gamma, 2) \to F(\Gamma, 2)$ sends $T^2_{ij}$ onto $T^2_{ji}$. Hence, as a $\Z[\Z_2]$-module, $H_2(F(\Gamma, 2))$
is free of rank $\frac{1}{2}b_2(F(\Gamma, 2))$.

(3) We emphasize that in the statement of Theorem \ref{htwo} the indices $i, j$ can also take the value $0$.
\vskip 0.6cm

\begin{proof}[Proof of Theorem \ref{htwo}] Denote by $z_i\in H_1(\Gamma)$ the homology class of the cycle represented by the boundary of domain $U_i$, passed in the anti-clockwise direction, where $i=1, 2, \dots, r$. The classes $z_1, \dots, z_r$ form a free basis of $H_1(\Gamma)$. The class $z_0\in H_1(\Gamma)$ of the curve surrounding the graph, equals $z_1+ \dots+z_r$.

Suppose that $x\in H_1(\Gamma)\otimes H_1(\Gamma)$ is such that $I(x)=I_\Gamma(x)=0\in H_2(N, \partial N)$. Write
\begin{eqnarray}\label{x} x=\sum_{i, j=1}^r x_{ij}z_i\otimes z_j, \quad x_{ij}\in \Z.\end{eqnarray}
Our goal is to show that {\it $x$ can be uniquely expressed as a linear combination of tensors \begin{eqnarray}\label{one1}
\gamma_{ij} =z_i\otimes z_j, \quad\mbox{such that $i, j=1, \dots, r$ and $\bar U_i\cap \bar U_j=\emptyset$}
\end{eqnarray}
and also of tensors of the form
\begin{eqnarray}\label{two2}
\quad \quad \alpha_i =z_i \otimes z_0 = \sum_{j=1}^r z_i\otimes z_j  , \quad \mbox{and}\quad \beta_i = z_0\otimes z_i = \sum_{j=1}^r z_j\otimes z_i,\end{eqnarray}
such that $\bar U_i\cap \bar U_0=\emptyset$, where $i=1, \dots, r$.  }
The tensors (\ref{one1}) and (\ref{two2}) obviously
lie in the kernel of $I$. Theorem \ref{htwo} follows once the italicized claim has been proven.

One can rephrase this claim as follows:

{\it If a tensor $x\in H_1(\Gamma)\otimes H_1(\Gamma)$ represented in the form (\ref{x}) satisfies $I(x)=0$ then there exist unique integers $$a_1, a_2, \dots, a_r, \quad b_1, b_2, \dots, b_r\in \Z$$ (called left and right weights) such that
\begin{eqnarray}\label{main}
x_{ij}=a_i+b_j\end{eqnarray}
for any pair $(i,j)$ satisfying $\bar U_i \cap \bar U_j\not=\emptyset$;
moreover, one requires that
\begin{eqnarray}
a_i=0=b_i
\end{eqnarray}
for any $i=1, \dots, r$ satisfying $\bar U_i\cap \bar U_0\not=\emptyset$.
}

Indeed, if such weights $a_i, b_i$ are found then the linear combination
$$\sum_{i=1}^r a_iz_i\otimes z_0 + \sum_{j=1}^r b_jz_0\otimes z_j$$
has coefficient $x_{ij}$ in front of any tensor $z_i\otimes z_j$ with  $\bar U_i\cap \bar U_j\not=\emptyset$ and therefore it equals
$x$ minus a linear combination of tensors of type (\ref{one1}).


Note that it is enough to find the weight $a_i$ only since the other weights $b_i$ can be found from the relation $$x_{ii}=a_i+b_i.$$

Consider the following operation of {\it analytic continuation across an edge}.
\begin{figure}[h]
\begin{center}
\resizebox{5cm}{6cm}{\includegraphics[131,236][381,563]{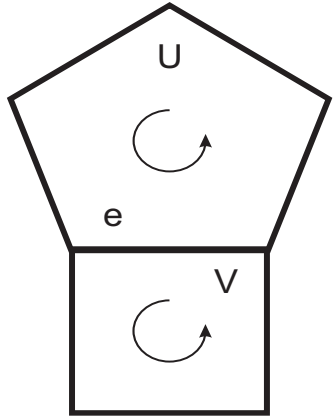}}
\end{center}
\caption{Two adjacent domains.}
\end{figure}
 Let $U$ and $V$ be two domains of the complement $\R^2-\Gamma$ having a common edge $e$.
Suppose that the weight $a_U$ is given. Then we have the following system of equations
\begin{eqnarray}\label{syst}
x_{UU} =a_U+b_U,\\
x_{UV} = a_U+b_V,\\
x_{VU} = a_V+b_U, \\
x_{VV} = a_V + b_V
\end{eqnarray}
to determine the remaining weights $b_U, a_V, b_V$. Here $x_{UU}$ , $x_{UV}$, $x_{VU}$ and $x_{VV}$ denote the corresponding coefficients of (\ref{x}). A solution to system (\ref{syst}) exists and the weight $a_V$ is given by
\begin{eqnarray}
a_V=  x_{VU}-x_{UU}+a_U\\
= x_{VV}-x_{UV}+a_U
\end{eqnarray}
assuming that the following compatibility
condition is satisfied
\begin{eqnarray}\label{edge}
x_{UU}+x_{VV} = x_{UV}+x_{VU}.
\end{eqnarray}
Note that this equation is indeed satisfied as follows by applying the intersection form $I_{ee}(x)$ where $e$ is the edge separating $U$ and $V$ and relying on Lemma \ref{calc}.

Hence, starting with an arbitrary value of the weight $a_U$ we may export it across an edge to a neighbouring face $V$.
This process may be continued inductively, along any sequence of faces and edges.

Two major questions arise:

1) Suppose that we perform this continuation process around a vertex $v$.
\begin{figure}[h]
\begin{center}
\resizebox{6cm}{4.3cm}{\includegraphics[30,349][463,665]{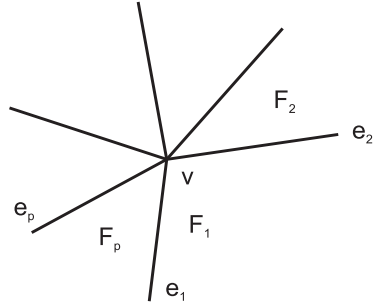}}
\end{center}
\caption{Exporting weights around a vertex.}
\end{figure}
We obtain a sequence of weights
$a_i, b_i$, where $i=1, \dots, p,$ such that $x_{ij}=a_i+b_j$
for all pairs satisfying $i=j \quad \mbox{or} \quad i-j=\pm 1.$
As compatibility conditions we have used all equations of the form $I_{ee}(x)=0$ for all edges $e$ separating the faces $U_i$.
Explicitly the solution is given by the formulae:
\begin{eqnarray}\label{explic}
a_j=\sum_{i=1}^{j-1}[x_{i+1,i}-x_{i,i}] + a_1,\\
b_j= x_{jj}- a_j,
\end{eqnarray}
where $j=1, \dots, p$.
 {\it Under which conditions one has
\begin{eqnarray}\label{three3}
x_{pq} =a_p+b_q\end{eqnarray}
for all remaining pairs $p, q$, i.e. for $p\not=q$ and $p-q\not=\pm 1$?}
Note that (\ref{three3}) is equivalent to
\begin{eqnarray}x_{qq}-x_{pq} = a_q-a_p\end{eqnarray}
which for $q>p$ in view of (\ref{explic}) is equivalent to
\begin{eqnarray}\label{four4}x_{qq}-x_{pq} = \sum_{i=p}^{q-1} [x_{i+1,i}-x_{i,i}]\end{eqnarray}
and for $q<p$ it can be written as
\begin{eqnarray}\label{five5}x_{pq} -x_{qq} = \sum_{i=q}^{p-1} [x_{i+1,i}-x_{i,i}].\end{eqnarray}

Consider two edges $e$ and $e'$ as shown on Figure \ref{around}, i.e. $e$ lies between $U_i$ and $U_{i+1}$ and $e'$ lies between $U_j$ and $U_{j+1}$.
\begin{figure}[h]
\begin{center}
\resizebox{7cm}{5cm}{\includegraphics[52,273][562,666]{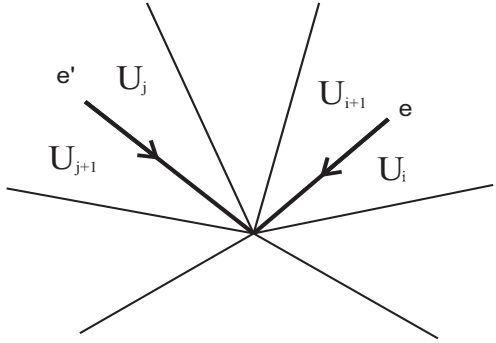}}
\end{center}
\caption{Domains around a vertex.}\label{around}
\end{figure}
Then the equation\footnote{Note that we do not require that the domains $U_i, U_{i+1}, U_j, U_{j+1}$ are distinct.}
$I_{ee'}(x)=0$
is equivalent to the equation
\begin{eqnarray}
x_{i,j} + x_{i+1, j+1} = x_{i, j+1} + x_{i+1, j}.
\end{eqnarray}
The latter equation can be rewritten as
\begin{eqnarray}
 x_{i+1, j+1} - x_{i, j+1} =  x_{i+1, j} - x_{i,j}.
\end{eqnarray}
It implies by induction that
$$x_{i+1, j}-x_{i, j}=x_{i+1, q}-x_{i,q}$$
for all $i, j, p$ and therefore (\ref{four4}) and (\ref{five5}) follow.

We conclude that there is no local monodromy, i.e. the result of the process of exporting weights around a vertex gives the initial weight and all obtained weights are compatible with each other.
The system of all obtained weights around a vertex is fully pairwise compatible, i.e. for any two domains $U_i$ and $U_j$ one has $x_{ij}=a_i+b_j$.

Suppose that we started at a domain $U$, fixed its weight $a_U$ arbitrarily, and continued it into some other face $V$ along a path of edges.
May the result depend on the path? The answer is negative. Indeed, weights of faces form a local system (flat line bundle) over the sphere with vertices of the graph removed. We know that
the monodromy around every vertex is trivial, but the loops surrounding vertices generate the fundamental group. Hence the whole monodromy is trivial.

Figure \ref{boundary} represents domains lying near the outer boundary of the graph.
\begin{figure}[h]
\begin{center}
\resizebox{7cm}{3.7cm}{\includegraphics[41,410][557,668]{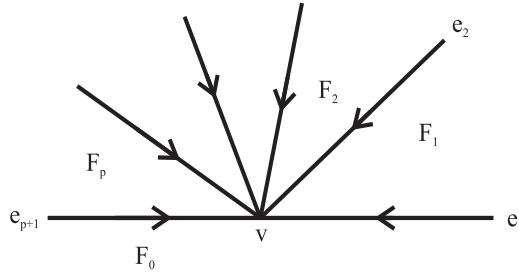}}
\end{center}
\caption{Planar domains near the outer component.}\label{boundary}
\end{figure}
The equation $I_{e_1e_1}(x)=0$ gives
$x_{11}=0$
and the equation $I_{e_ie_1}(x)=0$ (where $i=2, \dots, p$) gives
$x_{i-1,1} = x_{i,1}, \quad i=2, \dots, p.$
Hence we obtain that $$a_i=a_1=0\quad \mbox{for all}\quad i=2, \dots, p.$$

We may start our continuation process from a boundary domain; we may assume that the weight of this domain is trivial, $a_i=0$.
The argument above shows that moving along the boundary we will find that all other boundary domains have a trivial weights $a_j=0$.

This completes the proof.
\end{proof}

\begin{example} {\rm Consider the following graph $\Gamma= \Gamma_p$ consisting of two concentric circles and $p\ge 3$ radii.
\begin{figure}[h]
\begin{center}
\resizebox{6cm}{5cm}{\includegraphics[186,521][436,738]{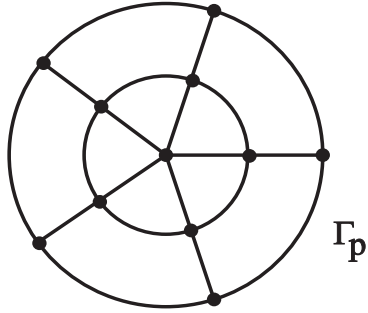}}
\end{center}
\caption{Graph $\Gamma_p$ for $p=5$.}
\end{figure}
We want to apply Theorem \ref{htwo}. The complement $\R^2-\Gamma$ consists of $2p+1$ domains $U_0, U_1, \dots, U_{2p}$ where $U_0$ denotes the exterior, $U_1, \dots, U_p$ are domains within the inner circle and $U_{p+1}, \dots, U_{2p}$ are domains of the annulus between the inner and outer circles. For any $i\in \{1, \dots, p\}$ one has $\bar U_i \cap \bar U_j=\emptyset$ for $j=0$ and for $p-3$ values $j\in \{p+1, \dots, 2p\}$.
Besides, for any $i\in \{p+1, \dots, 2p\}$ there exist $p-3$ values $j\in \{p+1, \dots, 2p\}$ such that $\bar U_i \cap \bar U_j=\emptyset$.
Applying Theorem \ref{htwo} we obtain $b_2(F(\Gamma, 2))= 3p^2 -7p$. Since $\chi(\Gamma) =1-2p$ and $\chi(F(\Gamma,2)) = 3p^2-11p$ (as follows from (\ref{chif})). This implies that $b_1(F(\Gamma, 2))= 4p+1= 2b_1(\Gamma)+1$. In view of exact sequence (\ref{secex}) it implies that for any $p\ge 3$ the cokernel of the intersection form
$I$ has rank one in this example.}\end{example}

\begin{example} {\rm Consider now a modification $\Gamma'_p$ of the previous example shown on Figure \ref{gammap1}.
\begin{figure}[h]
\begin{center}
\resizebox{5.5cm}{5cm}{\includegraphics[204,507][442,731]{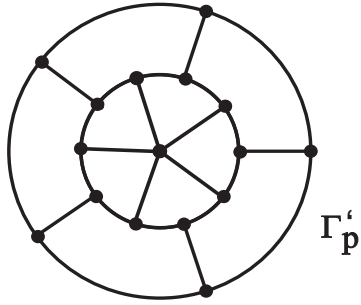}}
\end{center}
\caption{Graph $\Gamma'_p$ for $p=5$.}\label{gammap1}
\end{figure}
Here the picture inside the inner circle is rotated by the angle $\pi/p$. As above we denote by $U_0$ the outer domain, by $U_1, \dots, U_p$ the domains within the inner circle, and by $U_{p+1}, \dots, U_{2p}$ the domains lying in the annulus between the inner and outer circles. Each of the domains $U_1, \dots, U_p$ is disjoint from $U_0$ and from $p-2$ domains
$U_{p+1}, \dots, U_{2p}$. Besides, each $U_i$ with $i\in \{p+1, \dots, 2p\}$ is disjoint from $p-3$ domains $U_{p+1}, \dots, U_{2p}$.
Applying Theorem \ref{htwo} we find that $$b_2(F(\Gamma'_p,2))= 2p +(p-3)\cdot p + (p-2)\cdot (2p) = 3p^2 -5p.$$

Since $\chi(\Gamma'_p) = \chi(\Gamma_p) = 1-2p$ we may use (\ref{chif}) to find
$$\chi(F(\Gamma'_p)) = (1-2p)^2 +(1-2p) - 6p - (p-1)(p-2) = 3p^2-9p.$$
This gives $b_1(F(\Gamma'_p, 2)) = 4p+1$. Again, in view of exact sequence (\ref{secex}), we find that the cokernel of the intersection form $I$ has rank 1.}
\end{example}
\section{Planar graphs, II}

In this section we describe the first Betti number $b_1(F(\Gamma, 2))$ for a connected planar graph $\Gamma$.

\begin{proposition}\label{prop17}
For any connected planar graph $\Gamma\subset \R^2$ having an essential vertex the cokernel $\coker(I_\Gamma)$ of the intersection form
(\ref{intersection}) has rank $\ge 1$.
\end{proposition}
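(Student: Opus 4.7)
Plan.

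The plan is to reduce the rank inequality to a planar-combinatorial estimate using the tools already established. Since the hypothesis of an essential vertex ensures $\Gamma$ is neither $S^1$ nor $[0,1]$, Corollary \ref{ndn} applies. Combined with the exact sequence (\ref{secex}), this yields the identity
\[
\rk(\coker I_\Gamma) \;=\; \rk H_2(N,\partial N) - \rk(\im I_\Gamma) \;=\; b_2(F(\Gamma,2)) + b_1(\Gamma) - b_1(\Gamma)^2 + S - 1,
\]
where $S = \sum_v (\mu(v)-1)(\mu(v)-2)$.

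Next, I would insert the geometric formula from Theorem \ref{htwo}: writing $r=b_1(\Gamma)$ and $\mathcal{P}$ for the set of unordered pairs $\{i,j\}$ of distinct faces with $\bar U_i \cap \bar U_j \neq \emptyset$, we have
\[
b_2(F(\Gamma,2)) \;=\; (r+1)^2 - (r+1) - 2|\mathcal{P}| \;=\; r^2+r - 2|\mathcal{P}|.
\]
The proposition $\rk(\coker I_\Gamma)\ge 1$ thus becomes the purely combinatorial inequality
\[
|\mathcal{P}| \;\leq\; r - 1 + \tfrac{S}{2}.
\]

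To establish this, the natural tool is to double count incidences of triples $(v, \{i,j\})$ with $\{i,j\}\in\mathcal{P}$ and $v\in \bar U_i\cap \bar U_j$. At every vertex $v$ the number of distinct-face pairs incident to $v$ is at most $\binom{\mu(v)}{2}$. Using $\sum_v \mu(v) = 2E$ and Euler's formula $V-E+(r+1)=2$ for the planar graph, a routine algebraic manipulation gives
\[
\sum_v \binom{\mu(v)}{2} \;=\; \tfrac{S}{2} + E + r - 1.
\]
Since each pair in $\mathcal{P}$ whose intersection contains an edge is counted at least twice (once at each endpoint of that edge), and the number of such edge-contributions is controlled by the number of non-bridge edges of $\Gamma$, the excess term $E$ is absorbed. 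The hypothesis of an essential vertex produces the strict gain in this accounting that converts the resulting inequality into $|\mathcal{P}|\le r-1+S/2$ rather than $|\mathcal{P}|\le r-1+S/2+E$.

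The main obstacle I expect is this last combinatorial step: one must handle bridges (which contribute to neither edge-adjacency nor doubling), multiple edges between the same pair of faces (which force the doubling to overcount), and pairs whose intersection consists only of isolated vertices. A clean way to carry this out is to split $\mathcal{P}=\mathcal{P}_E\sqcup\mathcal{P}_V$ into edge-sharing and vertex-only pairs, bound $|\mathcal{P}_E|$ by the number of distinct face-pairs separated by edges, and then use the refined inequality $t_{ij}\geq m_{ij}+1$ (where $m_{ij},t_{ij}$ count shared edges and vertices) at any essential vertex incident to $\bar U_i\cap \bar U_j$. This refinement is exactly where the essential-vertex hypothesis is consumed, yielding the extra $+1$ that gives $\rk(\coker I_\Gamma) \geq 1$.
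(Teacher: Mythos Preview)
Your route is genuinely different from the paper's, and the reduction you carry out in the first half is correct: under the hypotheses, the identity
\[
\rk(\coker I_\Gamma)\;=\;2r-1+S-2|\mathcal P|
\]
is a valid consequence of the exact sequence (\ref{secex}), Corollary~\ref{ndn}, and Theorem~\ref{htwo}, so the Proposition is equivalent to the combinatorial inequality $|\mathcal P|\le r-1+S/2$. The paper, by contrast, never invokes Theorem~\ref{htwo} at all. It produces directly a nonzero class $\xi\in H^2(N,\partial N)$ annihilating $\im I_\Gamma$: the map $\psi(x,y)=x-y$ sends $(\Gamma\times\Gamma,D(\Gamma,2))$ to $(\R^2,\R^2-\{0\})$, and $\xi=\psi^\ast(\iota)$ is the pullback of the fundamental class. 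Then $\langle\xi,I(z\otimes z')\rangle$ is the planar intersection number of $z$ and $z'$, hence zero; and $\xi\neq 0$ is witnessed by an explicit local relative cycle built from any three edges meeting at the essential vertex. This is short, geometric, and self-contained.

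The gap in your proposal is the final combinatorial step, which you yourself flag as the main obstacle but do not actually close. The specific mechanism you propose --- the inequality $t_{ij}\ge m_{ij}+1$ for the number of shared vertices versus shared edges of $\bar U_i\cap\bar U_j$, with the essential vertex supplying the ``$+1$'' --- is false in general. Take the lollipop graph (a triangle with one pendant edge): it is connected, planar, and has an essential vertex, yet the two faces $U_0,U_1$ meet exactly along the triangle, so $t_{01}=m_{01}=3$. Your sketch also leaves the bridge bookkeeping vague: bridges simultaneously lower $k_v$ below $\mu(v)$ (weakening the upper bound $\sum\binom{k_v}{2}$) and fail to contribute to the edge count $M$ (weakening the lower bound $T\ge M+|\mathcal P|$), and you have not shown these effects balance. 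Finally, the sentence about the essential vertex converting ``$|\mathcal P|\le r-1+S/2+E$'' into ``$|\mathcal P|\le r-1+S/2$'' cannot be right as written: an essential vertex is a local condition and cannot by itself absorb a global $E$ term. The inequality you need is true (it must be, since it is equivalent to the Proposition), but your argument does not prove it; and even if repaired, it would rest on the full strength of Theorem~\ref{htwo}, whereas the paper's winding-number argument is a few lines.
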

\begin{proof} We construct an explicit cohomology class \begin{eqnarray}\label{xi}
\xi\in H^2(N, \partial N)\end{eqnarray}
and show that (i) $\xi\not=0$ while (ii) the evaluation
$\langle \xi, I(z\otimes z')\rangle =0$ vanishes for any homology classes $z, z'\in H_1(\Gamma)$. Denote by
$$\psi: (\Gamma\times \Gamma, D(\Gamma, 2)) \to (\R^2, \R^2-\{0\})$$
the map given by $$\psi(x, y) = x-y.$$ Let $\xi=\psi^\ast(\iota)\in H^2(N, \partial N)$ be the image of the fundamental class $\iota \in H^2(\R^2, \R^2-\{0\})$
under the
induced map on cohomology
$$\psi^\ast: H^2(\R^2, \R^2-\{0\})\to H^2(\Gamma\times \Gamma, D(\Gamma, 2))\simeq H^2(N, \partial N).$$

To prove that $\xi$ is nonzero consider an essential vertex $u$ of $\Gamma$ and three edges, $e_1, e_2,e_3$, incident to it as shown on Figure \ref{triple1}.
\begin{figure}[h]
\begin{center}
\resizebox{4.4cm}{4cm}{\includegraphics[50,309][506,665]{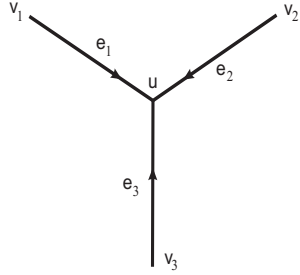}}
\end{center}
\caption{Three edges meeting at an essential vertex $u$.}\label{triple1}
\end{figure}
Consider the 2-dimensional chain $y\in C_2(N)$ given by
$$y=e_1(e_2-e_3) +e_2(e_3-e_1) +e_3(e_1-e_2).$$
It also can be represented in the form $$y=\sum_{(i,j,k)}\epsilon_{(ijk)} e_ie_j$$
where the sum is taken with respect to all permutations $(ijk)$ of $1,2,3$.
Clearly, $y$ has as its boundary the following 1-dimensional cycle
$$
\begin{array}{ccl}\partial y&=& v_1(e_3-e_2) + (e_1-e_3)v_2+ \\
&&v_3(e_2-e_1) + (e_3-e_2)v_1 +\\
&&v_2(e_1-e_3) + (e_2-e_1)v_3.\end{array}
$$
Here $v_1(e_3-e_2)$ is the motion of two particles such that the first point stands at $v_1$ and the second point moves from $v_3$ to $v_2$; the other parts of
$\partial y$ can be interpreted similarly.
It follows that $\partial y$ lies in $C_\ast(\partial N)$ and hence $y$ is a relative cycle. The evaluation $\langle \xi, \{y\}\rangle$ equals $\pm 1$ since the image of $\partial y$ under $\psi$ is a closed curve in the punctured plane $\R^2-\{0\}$ making one full twist around the origin. This claim is based on the observation that the angle which makes the ray from the first to the second point is always increasing.

Note that $\partial y$ can also be written in the following symmetric forms
\begin{eqnarray}\label{dy}
\partial y = -\sum_{(ijk)} \epsilon_{(ijk)}(v_ie_j+e_jv_i) = \sum_{(ijk)}\epsilon_{(ijk)}(e_iv_j - v_ie_j).
\end{eqnarray}

To prove (ii) consider two homology classes $z, z'\in H_1(\Gamma)$. Then $\langle \xi, I(z\otimes z')\rangle\in \Z$ equals the intersection number of cycles $z$ and $z'$ viewed as closed curves on the plane $\R^2$; it vanishes since $z$ and $z'$ bound on the plane.
\end{proof}

Next we present the result of Proposition \ref{prop17} in a different form.

Besides the natural embedding $\alpha: F(\Gamma, 2) \to \Gamma \times \Gamma$ (which appears in Propositions \ref{epi} and \ref{prop24}), the
configuration space $F(\Gamma, 2)$ embeds also into $F(\R^2, 2)$, the configuration space of two distinct points on the plane.

\begin{corollary}\label{corepi1}
For a connected planar graph $\Gamma\subset \R^2$ having an essential vertex, the map
$$\beta: F(\Gamma, 2) \to F(\R^2,2) \times \Gamma\times \Gamma$$ given by
$$(x, y) \mapsto ((x,y), x, y), \quad x, y \in \Gamma, \quad x\not=y$$ induces an epimorphism
\begin{eqnarray}\label{beta}
\beta_\ast: H_1(F(\Gamma, 2)) \to H_1(F(\R^2,2) \times \Gamma\times \Gamma)\end{eqnarray}
and a monomorphism
\begin{eqnarray*}\beta^\ast: H^1(F(\R^2,2)\times \Gamma\times \Gamma) \to H^1(F(\Gamma, 2)).\end{eqnarray*}
\end{corollary}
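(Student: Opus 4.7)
The plan is to factor $\beta$ and use the Künneth theorem to analyze its effect on each summand separately. Since $\psi(x,y)=x-y$ realizes a homotopy equivalence $F(\R^2,2)\simeq \R^2\setminus\{0\}\simeq S^1$, the Künneth formula gives
\[
H_1(F(\R^2,2)\times \Gamma\times \Gamma) \;\cong\; \Z \,\oplus\, H_1(\Gamma)\,\oplus\, H_1(\Gamma),
\]
where the $\Z$ summand is detected by the winding number of $x-y$ about the origin. Composing $\beta$ with the projection to $\Gamma\times \Gamma$ recovers the inclusion $\alpha$, which by Proposition \ref{epi} surjects on $H_1$; hence the last two summands are already in the image of $\beta_\ast$.

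To hit the $\Z$ summand I would reuse the relative cycle from the proof of Proposition \ref{prop17}. Fix an essential vertex $u$ with three incident edges $e_1,e_2,e_3$ meeting opposite vertices $v_1,v_2,v_3$, and form $y=\sum_{(ijk)}\epsilon_{(ijk)}e_ie_j\in C_2(N)$. Its boundary $\partial y$ lies in $C_1(\partial N)\subset C_1(F(\Gamma,2))$ and is therefore a 1-cycle in $F(\Gamma,2)$. The identity $\langle \xi,\{y\}\rangle=\pm 1$ from Proposition \ref{prop17} says precisely that $\psi(\partial y)$ winds once around the origin, so the $\Z$-component of $\beta_\ast[\partial y]$ is $\pm 1$.

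It remains to verify that $[\partial y]$ projects to zero under each of the two coordinate projections $F(\Gamma,2)\to \Gamma$. Using the symmetric presentation $\partial y=\sum_{(ijk)}\epsilon_{(ijk)}(e_iv_j-v_ie_j)$, the cellular projection onto the first factor sends $e_iv_j\mapsto e_i$ and collapses $v_ie_j$ to the $0$-cell $v_i$; the resulting $1$-chain in $\Gamma$ is $\sum_{(ijk)}\epsilon_{(ijk)}e_i$, and for each fixed $i$ the sum over the two permutations of the remaining indices $j,k$ cancels, so the chain vanishes. The second factor is handled symmetrically. Therefore $\beta_\ast[\partial y]=(\pm 1,0,0)$, which together with the previous paragraph shows that $\beta_\ast$ is surjective.

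The cohomological statement follows by the universal coefficient theorem: since $H_0$ of both spaces is torsion free, $H^1(\cdot;\Z)\cong \Hom(H_1(\cdot),\Z)$, and a surjection on $H_1$ dualizes to an injection on $H^1$. The only genuinely nontrivial ingredient is the winding-number identity inherited from Proposition \ref{prop17}; the remaining work is Künneth bookkeeping and the elementary sign cancellation in $\partial y$, so I do not anticipate any serious obstacle.
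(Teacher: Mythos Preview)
Your proposal is correct and follows essentially the same route as the paper: K\"unneth splits $H_1(F(\R^2,2)\times\Gamma\times\Gamma)$, Proposition~\ref{epi} handles the $H_1(\Gamma)\oplus H_1(\Gamma)$ summand, and the cycle $\partial y$ from Proposition~\ref{prop17} hits the $\Z$ summand. The only cosmetic difference is that the paper observes $\alpha_\ast[\partial y]=0$ simply because $\partial y$ literally bounds the chain $y$ inside $N\subset\Gamma\times\Gamma$, whereas you verify the vanishing by projecting to each factor and using the sign cancellation; both arguments are fine.
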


\begin{proof} Clearly, $F(\R^2,2)$ is homotopy equivalent to $S^1$ and therefore
$H_1(F(\R^2, 2)=\Z$.  In the proof of Proposition \ref{prop17} we constructed a loop $\partial y$ in $F(\Gamma, 2)$ such that the image of its homology class
$\{\partial y \} \in H_1(F(\Gamma, 2))$ under the map $\alpha_\ast: H_1(F(\Gamma, 2)) \to H_1(\Gamma\times \Gamma)$ vanishes and the image of the class $\{\partial y\}$ under the homomorphism $H_1(F(\Gamma, 2)) \to H_1(F(\R^2, 2))$
is a generator. Now Corollary \ref{corepi1} follows from Proposition \ref{epi}.
\end{proof}

\begin{theorem}\label{thm3} Let $\Gamma\subset \R^2$ be a connected planar graph such that every vertex $v$ has valence $\mu(v)\ge 3$. Denote by $U_0$, $U_1$,  $\dots, U_r$ the connected components of the
 complement $\R^2-\Gamma$ where $r=b_1(\Gamma)$ and $U_0$ is the unbounded component. Assume that:
 \begin{enumerate}
 \item[(a)] the closure of every domain $\bar U_i$ with $i= 1, \dots, r$ is contractible, and $\bar U_0$ is homotopy equivalent to the circle $S^1$;
\item[(b)] for every pair $i, j\in \{0, 1, \dots, r\}$
the intersection $\bar U_i \cap \bar U_j$ is connected.
\end{enumerate}
Then\footnote{Observe that the cokernel of the intersection form $I_\Gamma$ has rank one in this case, as follows by comparing the result of Theorem \ref{thm3} with Proposition \ref{prop6}.}
\begin{eqnarray}\label{btwo3}
b_1(F(\Gamma, 2)) = 2b_1(\Gamma) + 1
\end{eqnarray}
and $b_2(F(\Gamma, 2))$ equals
\begin{eqnarray}\label{btwo2}
b_1(\Gamma)^ 2- b_1(\Gamma) +2 - \sum_{v\in V(\Gamma)}(\mu(v)-1)(\mu(v)-2).
\end{eqnarray}
Here $V(\Gamma)$ denotes the set of vertices of $\Gamma$.
\end{theorem}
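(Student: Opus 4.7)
My plan is to establish (\ref{btwo2}) for $b_2$ as a direct combinatorial consequence of Theorem \ref{htwo}, and then to derive (\ref{btwo3}) from the Euler characteristic formula (Corollary \ref{euler}). The starting point is Theorem \ref{htwo}: $b_2(F(\Gamma,2))$ equals the number of ordered pairs $(i,j)\in\{0,1,\dots,r\}^2$ with $\bar U_i\cap \bar U_j=\emptyset$. I would first observe that under assumption (a) each boundary $\partial U_i$ is a simple cycle in $\Gamma$ (for $i\ge 1$ because $\bar U_i$ is a contractible disk, and for $i=0$ because $\bar U_0\simeq S^1$ forces the outer boundary to be a single simple cycle). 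Since distinct faces have disjoint interiors, for $i\ne j$ one has $\bar U_i\cap \bar U_j=\partial U_i\cap \partial U_j$, which as a subgraph of either simple cycle is a disjoint union of paths and isolated vertices. Assumption (b) then forces this to be a single component, so the nonempty intersection of any two distinct face closures is a single path---in particular a tree.

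Next I would count. Let $S$ be the set of unordered pairs $\{i,j\}$, $i\ne j$, with $\bar U_i\cap \bar U_j\ne\emptyset$, and let $V(i,j)$ and $E(i,j)$ be the numbers of vertices and edges of this tree. The tree property gives $V(i,j)-E(i,j)=1$, while counting incidences,
\begin{eqnarray*}
\sum_{\{i,j\}\in S} V(i,j)=\sum_{v\in V(\Gamma)}\binom{\mu(v)}{2},\qquad \sum_{\{i,j\}\in S} E(i,j)=E(\Gamma),
\end{eqnarray*}
since each vertex lies on $\mu(v)$ distinct face boundaries and each edge separates exactly two distinct faces (the latter because (a) rules out bridges: a bridge would force the boundary of its unique adjacent face to traverse the edge twice, contradicting the simple-cycle structure). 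Subtracting yields $|S|=\sum_v\binom{\mu(v)}{2}-E(\Gamma)$, and therefore
\begin{eqnarray*}
b_2(F(\Gamma,2))=r(r+1)-2|S|=r^2+r+2E(\Gamma)-\sum_v\mu(v)(\mu(v)-1).
\end{eqnarray*}
A short rearrangement using $\sum_v\mu(v)=2E(\Gamma)$ and $V(\Gamma)-E(\Gamma)=1-r$ converts the right-hand side into (\ref{btwo2}).

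Formula (\ref{btwo3}) will then follow by combining (\ref{btwo2}) with Corollary \ref{euler} through the identity $\chi(F(\Gamma,2))=1-b_1(F)+b_2(F)$, valid since $F(\Gamma,2)$ is path-connected (as noted after Corollary \ref{euler}) and has CW-dimension at most $2$. Substituting $\chi(\Gamma)=1-r$ into (\ref{chif}) and solving for $b_1(F)$ gives $b_1(F)=2r+1$, which is (\ref{btwo3}).

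The main obstacle I anticipate is the tree assertion made in the first paragraph: both hypotheses are used essentially---(a) ensures each $\partial U_i$ is one simple cycle, so that any pairwise intersection is a disjoint union of paths in a cycle, and (b) eliminates the multi-component case. Once this combinatorial-geometric fact is in place, the remaining steps are routine bookkeeping with Euler characteristic identities.
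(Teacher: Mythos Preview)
Your argument is correct and follows the paper's route: deduce $b_2$ from Theorem \ref{htwo} by counting the ordered pairs of faces whose closures meet, then read off $b_1$ via Corollary \ref{euler} and $\chi(F)=1-b_1(F)+b_2(F)$. The only difference is bookkeeping in the count---the paper sharpens your tree observation (using $\mu(v)\ge 3$ together with (a)) to the statement that each nonempty $\bar U_i\cap\bar U_j$ with $i\ne j$ is a single vertex or a single edge, tallies these two types separately as $\sum_v\mu(v)(\mu(v)-3)$ and $2E$, and arrives at the same total.
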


\begin{proof}
The number of all possible ordered pairs $(U_i, U_j)$ of distinct domains $i, j \in \{0, 1, \dots, r\}$ equals $r(r+1) =b_1(\Gamma)(b_1(\Gamma)+1)$. Our assumption implies that if $i\not=j$ and $\bar U_i \cap \bar U_j\not=\emptyset$ then the intersection $\bar U_i\cap \bar U_j$ is either a vertex or an edge. We say that a pair $(i,j)$ is of type one (type two) iff  $\bar U_i\cap \bar U_j$ is an edge (vertex, correspondingly).
Clearly, the number of pairs of type one
 equals $2E$ since each edge is incident to exactly two distinct domains $U_i$; here we use our assumptions (a) and (b) and $E=|E(\Gamma)|$ denotes the number of edges of $\Gamma$.
 The number of pairs $(i,j)$ of type two equals
 \begin{eqnarray}\label{sum-3}
 \sum_{v\in V(\Gamma)} \mu(v)\cdot (\mu(v)-3).
 \end{eqnarray}
 Indeed, consider a vertex $v$ and $\mu(v)$ domains incident to it. All these domains are distinct as follows from assumption (a).
 We observe that each of these domains $U_i$ forms a pair of type two with $\mu(v)-3$ of the domains $U_j$ incident to $v$. This explains formula (\ref{sum-3}).

Thus, applying Theorem \ref{htwo} we find
\begin{eqnarray}\label{sum-4}
\quad \quad b_2(F(\Gamma, 2)) = b_1(\Gamma)^2 + b_1(\Gamma) - 2E - \sum_{v\in V(\Gamma)} \mu(v)\cdot (\mu(v)-3).
\end{eqnarray}
By the Euler - Poincare theorem $V-E= 1-b_1(\Gamma)$; now formula (\ref{sum-4}) leads to (\ref{btwo2}), after some elementary transformations.

To prove (\ref{btwo3}) one writes $b_1(F)=1+b_2(F) - \chi(F),$ where $F=F(\Gamma, 2)$, and substitutes $b_2(F)$ and $\chi(F)$ using (\ref{btwo2}) and (\ref{chif}).

This completes the proof.
\end{proof}

Theorem \ref{thm3} and Corollary \ref{corepi1} imply the following result:

\begin{corollary}\label{cor74} For any planar graph $\Gamma\subset \R^2$ satisfying assumptions of Theorem \ref{thm3} the homomorphism (\ref{beta}) is an isomorphism\footnote{We do not know if the first homology group $H_1(F(\Gamma, 2))$ may have nontrivial torsion. Corollary \ref{cor74} holds with integral coefficients assuming that this torsion vanisihes.}
$$\beta_\ast: H_1(F(\Gamma, 2);\Q) \to H_1(F(\R^2, 2) \times \Gamma\times \Gamma;\Q).$$
\end{corollary}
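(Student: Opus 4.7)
The plan is to deduce this by a simple dimension count, combining the epimorphism statement of Corollary \ref{corepi1} with the rank formula supplied by Theorem \ref{thm3}.

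First I would identify the rank of the target. Since $F(\R^2,2)$ is homotopy equivalent to $S^1$, the K\"unneth formula gives
\begin{eqnarray*}
H_1(F(\R^2,2)\times \Gamma\times \Gamma;\Q) \, \simeq \, \Q\oplus H_1(\Gamma;\Q)\oplus H_1(\Gamma;\Q),
\end{eqnarray*}
a $\Q$-vector space of dimension $1+2b_1(\Gamma)$. On the other hand, Theorem \ref{thm3} yields $b_1(F(\Gamma,2))=2b_1(\Gamma)+1$, so both source and target of $\beta_\ast\otimes \Q$ have the same finite dimension.

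Next I would invoke Corollary \ref{corepi1}: the map $\beta_\ast$ is surjective integrally, and so remains surjective after tensoring with $\Q$. A surjective linear map between finite-dimensional $\Q$-vector spaces of equal dimension is an isomorphism, which gives the desired conclusion.

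There is essentially no obstacle here; the only subtlety is that the analogous integral statement requires the vanishing of the torsion in $H_1(F(\Gamma,2))$, which is precisely the caveat noted in the footnote to the corollary. Accordingly, I would phrase the argument entirely over $\Q$ and remark that with integer coefficients the statement is equivalent to the torsion-freeness of $H_1(F(\Gamma,2))$.
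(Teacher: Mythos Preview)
Your argument is correct and is exactly the one the paper has in mind: the authors simply state that Theorem~\ref{thm3} and Corollary~\ref{corepi1} imply Corollary~\ref{cor74}, and your dimension count (surjectivity of $\beta_\ast$ from Corollary~\ref{corepi1} together with the rank equality $b_1(F(\Gamma,2))=2b_1(\Gamma)+1=1+2b_1(\Gamma)$ from Theorem~\ref{thm3} and K\"unneth) is precisely how this implication works.
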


\subsection*{Explicit generators of $H_1(F(\Gamma, 2);\Q)$} Next we describe a specific set of cycles whose homology classes form a free basis of $H_1(F(\Gamma, 2);\Q)$ assuming that $\Gamma$ satisfies conditions of Theorem \ref{thm3}. Let $U_0,U_1, \dots,U_r$ be the connected components of the complement $\R^2-\Gamma$ where $r=b_1(\Gamma)$ and $U_0$ denotes the unbounded component. For each $i=1, \dots, r$ let $c_i\in C_1(\Gamma)$ be the cellular chain representing the boundary $\partial U_i$ passed in the anticlockwise direction. Let $v_i$ be a vertex not incident to $c_i$. Then $c_iv_i$ and $v_ic_i$ are clearly cycles in $F(\Gamma, 2)$; these are $2r$ elements
of our basis.

To describe an additional basis element consider a triple of edges $e_\alpha,e_\beta, e_\gamma$ meeting at a vertex $u$ similar to the situation shown on Figure \ref{triple1}. Let $\partial e_\alpha=u-v_\alpha$, $\partial e_\beta=u-v_\beta$, $\partial e_\gamma=u-v_\gamma$, i.e. these edges meet at point $u$ and originate at $v_\alpha$, $v_\beta$ and $v_\gamma$ correspondingly. The formula
$$\{e_\alpha, e_\beta, e_\gamma\} \, = \, \sum_{(ijk)}\epsilon_{(ijk)}(v_ie_j+e_jv_i)$$ (compare (\ref{dy}); here $(ijk)$ runs over all permutations of indices
$\alpha,\beta, \gamma$) gives a cycle in $F(\Gamma, 2)$ and its homology class together with the classes
$\{c_iv_i\}$, $\{v_ic_i\}$ (see the previous paragraph) form a free basis of the group $H_1(F(\Gamma, 2))$. This follows from the arguments of the proof of Proposition \ref{prop17}.

\begin{figure}[h]
\begin{center}
\resizebox{8cm}{4cm}{\includegraphics[15,308][509,592]{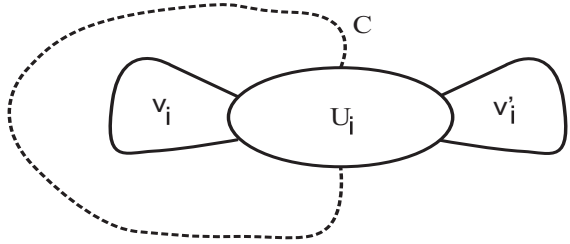}}
\end{center}
\caption{A separating domain.}\label{separating}
\end{figure}
Note that under assumptions of Theorem \ref{thm3} the homology classes of the cycles $c_iv_i$ and $v_ic_i$ are independent of the choice of the points
$v_i\in \Gamma - \partial U_i$, where $i=1, \dots, r$.
This follows from the observation that
 the complement $\Gamma-\partial U_i$ is path connected. Indeed if two points
$v_i, v'_i\in \Gamma$ lie in different connected components of $\Gamma-\partial U_i$ then there exists an arc $C\subset U_0$ with $\partial C=C\cap \partial U_0$ and such that the points $v_i$ and $v'_i$ belong to different connected components of $\R^2- (C\cup \partial U_i)$. This implies that the intersection $\bar U_i\cap \bar U_0$ is disconnected, contradicting our assumptions, see Figure \ref{separating}..

\begin{example} {\rm Consider graphs $\Gamma_1$ and $\Gamma_2$ shown in Figure \ref{gamma12}.

Graph $\Gamma_1$ does not satisfy condition (b) of Theorem \ref{thm3} since the intersection $\bar U_1 \cap \bar U_2$ is disconnected.
We find that $b_2(F(\Gamma_1, 2))=2$, $\chi(\Gamma) = -2$, $b_1(\Gamma_1) =3$, $\chi(F(\Gamma_1,2)) = -6$, and hence
$$b_1(F(\Gamma_1, 2)) = b_2(F(\Gamma_1, 2)) +1 - \chi(F(\Gamma_1, 2)) = 9.$$
We see that the conclusion of Theorem \ref{thm3} is false in this case.\begin{figure}[h]
\begin{center}
\resizebox{6cm}{4cm}{\includegraphics[50,192][534,570]{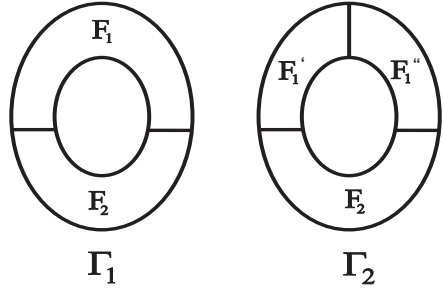}}
\end{center}
\caption{Graphs $\Gamma_1$ and $\Gamma_2$.}\label{gamma12}
\end{figure}

Graph $\Gamma_2$ is obtained from $\Gamma_1$ by dividing $U_1$ into two domains $U_1'$ and $U_1''$. Graph $\Gamma_2$ satisfies conditions of Theorem \ref{thm3}.
We obtain $b_2(F(\Gamma_2, 2)) =2$, $\chi(\Gamma_2)= -3$, $b_1(\Gamma_2) = 4$, $\chi(F(\Gamma_2, 2)) = -6$ and
$$b_1(F(\Gamma_2, 2)) = b_2(F(\Gamma_2, 2)) +1 - \chi(F(\Gamma_2, 2)) = 9=2b_1(\Gamma_2)+1.$$}
\end{example}

\begin{example} {\rm Consider the graph $\Gamma$ shown in Figure \ref{gamma15}. Clearly it does not satisfy condition (a) of Theorem \ref{thm3} as the closures of two of the domains of the complement are not simply connected. Let us show that the conclusion of Theorem \ref{thm3} is false in this case.
\begin{figure}[h]
\begin{center}
\resizebox{8cm}{5cm}{\includegraphics[37,329][524,665]{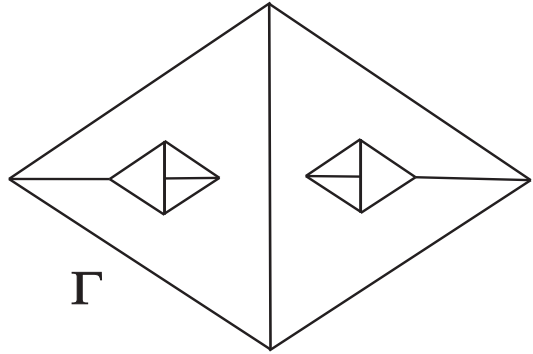}}
\end{center}
\caption{Graph not satisfying Theorem \ref{thm3}, condition (a).}\label{gamma15}
\end{figure}

We find in this example $V=14$, $E=21$, $\chi(\Gamma)= -7$ and $b_1(\Gamma)=8$. Computing $\chi(F(\Gamma, 2))$ via formula (\ref{chif}) gives $\chi(F(\Gamma, 2))= 14$. Counting pairs of disjoint domains gives (by Theorem \ref{htwo}) $b_2(F(\Gamma, 2))= 42$. Hence we find that $b_1(F(\Gamma, 2)) = 1+b_2(F(\Gamma, 2)) -\chi(F(\Gamma, 2)) = 29 \not= 17 = 2b_1(\Gamma)+1.$}
\end{example}

\begin{example} {\rm Consider now the following modification of the above graph obtained by splitting two domains, see Figure \ref{gamma15+}.
\begin{figure}[h]
\begin{center}
\resizebox{8cm}{5cm}{\includegraphics[37,329][524,665]{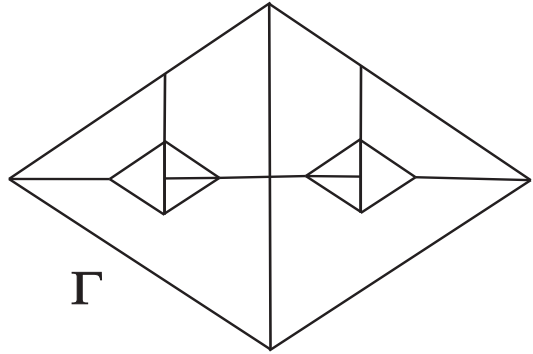}}
\end{center}
\caption{Graph satisfying assumptions of Theorem \ref{thm3}.}\label{gamma15+}
\end{figure}

In this example $V=17$, $E=28$, $\chi(\Gamma)= -11$ and $b_1(\Gamma)=12$. Computing $\chi(F(\Gamma, 2))$ via formula (\ref{chif}) gives $\chi(F(\Gamma, 2))= 56$. Counting pairs of disjoint domains gives (by Theorem \ref{htwo}) $b_2(F(\Gamma, 2))= 80$. Hence we find that $b_1(F(\Gamma, 2)) = 1+b_2(F(\Gamma, 2)) -\chi(F(\Gamma, 2)) = 25 = 2b_1(\Gamma)+1.$}
\end{example}

\section{The cup-product}

In this section we study the cup-product
\begin{eqnarray}
\qquad \cup: H^1(F; \Q) \times H^1(F; \Q) \to H^2(F; \Q), \, \mbox{where}\, F=F(\Gamma, 2).
\end{eqnarray}
Here $\Gamma\subset \R^2$ is a connected planar graph having an essential vertex.

Let $U_1, \dots, U_r$ denote the bounded connected components of the complement $\R^2-\Gamma$. Here $r=b_1(\Gamma)$ is the first Betti number of $\Gamma$.
Let $U_0$ denote the unbounded component of $\R^2-\Gamma$.
The boundary cycle of $U_i$ oriented anticlockwise is denoted by $z_i\in H_1(\Gamma)$, where $i=0, 1, \dots, r$. The homology classes $z_1, \dots, z_r$ form a basis of $H_1(\Gamma)$ and $z_0= z_1 + \dots+z_r$.

Denote
$$J(\Gamma) \, =\, \{(i, j); \, \bar U_i \cap \bar U_j =\emptyset,\,  \, i, j= 0, 1, \dots, r\}.$$
For $(i,j)\in J(\Gamma)$ denote by $T^2_{ij}\subset F(\Gamma, 2)$ the torus representing the set of all configurations when the first particle runs along the boundary of $U_i$ and the second particle runs along the boundary of $U_j$. We orient $\partial U_i$ and $\partial U_j$ in the anti-clockwise direction; then the torus $T^2_{ij}$ is naturally oriented.
By Theorem \ref{htwo} the homology classes of these tori
$$[T^2_{ij}] \in H_2(F; \Q), \quad (i,j) \in J(\Gamma)$$
form a basis of the vector space $H_2(F; \Q)$.

Let
$$\eta_{ij} \in H^2(F; \Q), \quad (i,j) \in J(\Gamma)$$
be the dual basis of cohomology classes. Hence,
$$\langle \eta_{ij}, [T^2_{kl}]\rangle = \left\{
\begin{array}{ll}
1, & \mbox{if} \quad (i,j)=(k,l),\\

0, & \mbox{otherwise}.
\end{array}
\right.$$

First we describe the cup-product of classes lying in the image of the homomorphism
$$\alpha^\ast: H^1(\Gamma\times \Gamma; \Q) \to H^1(F; \Q)$$
induced by the inclusion $\alpha: F\to \Gamma\times \Gamma$. Recall that by Proposition \ref{epi} $\alpha^\ast$ is injective assuming that $\Gamma$ is not homeomorphic to $S^1$.

\begin{theorem}\label{thm81}
Given cohomology classes $\xi^\pm, \eta^\pm \in H^1(\Gamma; \Q)$ consider the classes $\xi, \eta\in H^1(F; \Q)$ defined by the formulae
$$\xi=\alpha^\ast(\xi^+\times 1 +1\times \xi^-), \quad \eta=\alpha^\ast(\eta^+\times 1 +1\times \eta^-),$$
where $F=F(\Gamma, 2)$.
Their cup-product $\xi\cup \eta \in H^2(F; \Q)$ is given by
\begin{eqnarray}\label{cup}\xi\cup \eta = \sum_{(i,j)\in J(\Gamma)}\left[
\langle \eta^+, z_i\rangle \langle \xi^-, z_j\rangle
- \langle \xi^+, z_i\rangle \langle \eta^-, z_j\rangle
\right]\cdot \eta_{ij}.\end{eqnarray}
\end{theorem}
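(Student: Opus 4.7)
The plan is to exploit naturality of the cup product together with the vanishing of $H^2(\Gamma;\Q)$ in order to reduce the computation to the K\"unneth structure on $H^\ast(\Gamma\times\Gamma;\Q)$, and then to read off the coefficients of $\xi\cup\eta$ in the basis $\{\eta_{ij}\}$ by pairing with the dual basis of $H_2(F;\Q)$ supplied by Theorem \ref{htwo}.

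Setting $\tilde\xi = \xi^+\times 1 + 1\times\xi^-$ and $\tilde\eta = \eta^+\times 1 + 1\times\eta^-$ in $H^1(\Gamma\times\Gamma;\Q)$, naturality gives $\xi\cup\eta = \alpha^\ast(\tilde\xi\cup\tilde\eta)$. Expanding by distributivity produces four terms, of which the two ``diagonal'' terms $(\xi^+\cup\eta^+)\times 1$ and $1\times(\xi^-\cup\eta^-)$ vanish because $\Gamma$ is one-dimensional and hence $H^2(\Gamma;\Q)=0$. The remaining two ``off-diagonal'' terms combine, via graded commutativity in $H^\ast(\Gamma\times\Gamma)$, into an expression of the form $\pm(\xi^+\times\eta^- - \eta^+\times\xi^-)$, the overall sign being dictated by the Koszul rule for the cohomological cross product and needing to be tracked carefully to recover the precise signs on the right hand side of (\ref{cup}).

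To verify the formula I would pair with $[T^2_{ij}]$ for each $(i,j)\in J(\Gamma)$, since the coefficient of $\eta_{ij}$ in $\xi\cup\eta$ equals $\langle\xi\cup\eta,[T^2_{ij}]\rangle$ by duality. The key geometric observation is that when $(i,j)\in J(\Gamma)$ the boundaries $\partial U_i$ and $\partial U_j$ are disjoint subsets of $\Gamma$, so the torus $\partial U_i\times\partial U_j$ already lies inside $F=F(\Gamma,2)$, whence $\alpha_\ast[T^2_{ij}] = z_i\times z_j$ under the K\"unneth identification $H_2(\Gamma\times\Gamma)\simeq H_1(\Gamma)\otimes H_1(\Gamma)$. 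Applying the standard evaluation $\langle\mu\times\nu,\, z_i\times z_j\rangle = \langle\mu,z_i\rangle\,\langle\nu,z_j\rangle$ to each of the two surviving cross-product terms then produces exactly the bilinear combination of pairings displayed in (\ref{cup}). The only real obstacle is the sign accounting from graded commutativity and the Koszul conventions; there is no genuine topological subtlety, the statement being ultimately a direct consequence of the one-dimensionality of $\Gamma$ (which kills $H^2(\Gamma)$) together with the transparent K\"unneth structure of $\Gamma\times\Gamma$.
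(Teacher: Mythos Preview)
Your proposal is correct and follows essentially the same argument as the paper: naturality of $\alpha^\ast$, expansion of the cup product in $H^\ast(\Gamma\times\Gamma;\Q)$ using that $H^2(\Gamma;\Q)=0$, and evaluation on the basis $[T^2_{ij}]$ via $\alpha_\ast[T^2_{ij}]=z_i\times z_j$. The paper resolves the sign issue you flag by citing Proposition~7.14, Chapter~7 of Dold; otherwise the proofs are identical.
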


\begin{proof}
Firstly, one has
\begin{eqnarray*}\xi\cup \eta&=& \alpha^\ast((\xi^+\times 1 +1\times \xi^-)\cup(\eta^+\times 1 +1\times \eta^-))\\
&=&\alpha^\ast(\xi^+\times \eta^--\eta^+\times \xi^-).
\end{eqnarray*}
Secondly, evaluating the cup-product $\xi\cup \eta$ on a homology class $[T^2_{ij}]\in H_2(F(\Gamma, 2); \Q)$ for some $(i,j)\in J(\Gamma)$ we find
\begin{eqnarray*}
\langle \xi\cup \eta,[T^2_{ij}]\rangle  &=& \langle \alpha^\ast(\xi^+\times \eta^--\eta^+\times \xi^-), [T^2_{ij}]\rangle\\
&=& \langle(\xi^+\times \eta^- -\eta^+\times \xi^-),  \alpha_\ast[T^2_{ij}] \rangle \\
&=& \langle(\xi^+\times \eta^- - \eta^+\times \xi^-),  z_i \times z_j\rangle \\
&=&  - \langle \xi^+, z_i\rangle \langle \eta^-, z_j\rangle   + \langle \eta^+, z_i\rangle \langle \xi^-, z_j\rangle.
\end{eqnarray*}
The minus sign is a consequence of Proposition 7.14 from Chapter 7 of \cite{D}.
This proves formula (\ref{cup}).
\end{proof}

Formula (\ref{cup}) can also be presented in the following form.

Let $u_1,  \dots, u_r\in H^1(\Gamma; \Q)$ be the basis dual to $z_1, \dots, z_r\in H_1(\Gamma; \Q)$.
Denote
$$\xi_i=\alpha^\ast(u_i\times 1), \quad \eta_i =\alpha^\ast(1\times u_i) \in H^1(F(\Gamma, 2); \Q), \quad i=1, \dots, r.$$
Then
\begin{eqnarray}
\xi_i \cup \xi_j=0=\eta_i\cup \eta_j\quad \mbox{for all} \quad i,j=1, \dots,r
\end{eqnarray}
and
\begin{eqnarray}\label{cup1}
\xi_i\cup \eta_j = - \epsilon_{ij} \eta_{ij} - \epsilon_{i0} \eta_{i0} -\epsilon_{0j} \eta_{0j} \, \in H^2(F(\Gamma, 2); \Q),
\end{eqnarray}
where $\epsilon_{ij}$ denotes
$$\epsilon_{ij} = \left\{
\begin{array}{ll}
1, & \mbox{if} \, \, (i,j) \in J(\Gamma),\\ \\
0, & \mbox{if} \, \, (i,j) \notin J(\Gamma).
\end{array}
\right.
$$
To prove (\ref{cup1}) we observe that
$$\xi_i\cup \eta_j =- \sum_{(k,l)\in J(\Gamma)} \langle u_i , z_k\rangle\langle u_j, z_l\rangle\cdot \eta_{kl}$$
as follows from (\ref{cup}). In this sum only three terms might be nonzero; they correspond to cases $(k,l)=(i, j)$, $(k,l)=(i,0)$ or $(k,l)=(0,j)$; each of these cases happens iff the corresponding pair lies in $J(\Gamma)$.

\begin{definition}
Let $c\in C_1(\Gamma)$ be a cycle and $v\in \Gamma$ be a vertex not incident to edges which appear in $c$ with nonzero coefficients. Then $vc$ and $cv$ are cycles in $F(\Gamma, 2)$. We will say that a cohomology class $\xi\in H^1(F(\Gamma, 2); \Q)$ is special if the evaluation $\langle \xi, vc\rangle =0=\langle \xi, cv\rangle$ vanishes for any pair $c$ and $v$ as above.
\end{definition}

\begin{theorem}\label{thmspecial} Let $\Gamma$ be a planar graph. Then for any special cohomo\-logy class $\xi\in H^1(F(\Gamma, 2); \Q)$
one has
$\xi\cup \eta =0$ for any class $\eta\in H^1(F(\Gamma, 2); \Q)$
\end{theorem}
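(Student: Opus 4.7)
The plan is to evaluate $\xi\cup\eta$ against a spanning set of $H_2(F(\Gamma,2);\Q)$ and check that every pairing is zero. By Theorem \ref{htwo} the fundamental classes $[T^2_{ij}]$ of the tori $T^2_{ij}\subset F(\Gamma,2)$, indexed by pairs $(i,j)$ with $\bar U_i\cap \bar U_j=\emptyset$, span $H_2(F(\Gamma,2);\Q)$. Hence it is enough to show $\langle \xi\cup\eta,[T^2_{ij}]\rangle=0$ for every such pair $(i,j)$.

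For a fixed such pair let $\iota\colon T^2_{ij}\hookrightarrow F(\Gamma,2)$ be the inclusion. By naturality of the cup product,
$$\langle \xi\cup\eta,[T^2_{ij}]\rangle=\langle(\iota^\ast\xi)\cup(\iota^\ast\eta),[T^2_{ij}]\rangle,$$
so it suffices to prove $\iota^\ast\xi=0$ in $H^1(T^2_{ij};\Q)$. To this end I would exhibit explicit generators of $H_1(T^2_{ij};\Q)\cong\Q^2$ and use speciality directly. Choose any vertex $v_i$ on $c_i=\partial U_i$ and any vertex $v_j$ on $c_j=\partial U_j$. The two loops $c_iv_j$ and $v_ic_j$ (first particle goes around $c_i$ with the second fixed at $v_j$, and vice versa) generate $H_1(T^2_{ij};\Q)$. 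Since $\bar U_i\cap\bar U_j=\emptyset$, the vertex $v_j$ cannot be incident to any edge appearing in $c_i$, and symmetrically for $v_i$ and $c_j$; hence $c_iv_j$ and $v_ic_j$ are precisely of the form $cv$ and $vc$ appearing in the definition of a special class. By definition $\xi$ vanishes on both, so $\iota^\ast\xi=0$; consequently $\iota^\ast(\xi\cup\eta)=0$ and $\langle\xi\cup\eta,[T^2_{ij}]\rangle=0$.

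The only delicate point is this last identification: the standard $H_1$-generators of the torus $T^2_{ij}$ must be exhibited in exactly the form $cv,vc$ demanded by the definition, with the auxiliary vertex disjoint from the supporting edges of the cycle. The condition $\bar U_i\cap \bar U_j=\emptyset$ cutting out $J(\Gamma)$ supplies precisely this disjointness, so the only role played by planarity beyond the basis given by Theorem \ref{htwo} is in guaranteeing this clean geometric picture. Once $\iota^\ast\xi=0$ is verified for every $(i,j)\in J(\Gamma)$, the vanishing of $\xi\cup\eta$ in $H^2(F(\Gamma,2);\Q)$ follows, completing the argument.
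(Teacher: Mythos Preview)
Your proof is correct and follows essentially the same approach as the paper's: both reduce to showing that the restriction of a special class $\xi$ to each torus $T^2_{ij}$ vanishes, using that the standard generators $c_iv_j$ and $v_ic_j$ of $H_1(T^2_{ij};\Q)$ are of the form $cv$, $vc$ with the vertex disjoint from the cycle (guaranteed by $\bar U_i\cap\bar U_j=\emptyset$), and then invoking Theorem~\ref{htwo} to conclude.
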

\begin{proof} For any pair $(i,j)\in J(\Gamma)$ consider the torus $T^2_{ij}\subset F(\Gamma, 2)$. Given $\xi, \eta\in H^1(F(\Gamma, 2); \Q)$ as above consider the restrictions $\xi'=\xi|T^2_{ij}$ and $\eta'=\eta|T^2_{ij}$, where $\xi', \eta' \in H^1(T^2_{ij}; \Q)$. Then
$$\langle \xi\cup \eta, [T^2_{ij}]\rangle = \langle \xi'\cup \eta', s_{ij}\rangle$$
with $s_{ij}\in H_2(T^2_{ij}; \Q)$ denoting the fundamental class of the torus $T^2_{ij}$. Hence Theorem \ref{thmspecial} follows once we show that $\xi'=0$ for any special
cohomology class $\xi$.

Choose points $v_i\in \partial U_i$ and $v_j\in \partial U_j$. Since $\bar U_i$ and  $\bar U_j$ are disjoint, the cycles $v_i(\partial U_j)$ and $(\partial U_i )v_j$ lie in $F(\Gamma, 2)$ and  $\xi$  evaluates trivially on these cycles (as $\xi$ is special); but these cycles generate $H_1(T^2_{ij}; \Q)$ implying $\xi'=0$.
\end{proof}

\begin{theorem}\label{thmlast}
 Let $\Gamma\subset \R^2$ be a connected planar graph such that every vertex $v$ has valence $\mu(v)\ge 3$. Denote by $U_0$, $U_1$,  $\dots, U_r$ the connected components of the
 complement $\R^2-\Gamma$ where $r=b_1(\Gamma)$ and $U_0$ is the unbounded component. Assume that:
 \begin{enumerate}
 \item[(a)] the closure of every domain $\bar U_i$ with $i= 1, \dots, r$ is contractible, and $\bar U_0$ is homotopy equivalent to the circle $S^1$;
\item[(b)] for every pair $i, j\in \{0, 1, \dots, r\}$
the intersection $\bar U_i \cap \bar U_j$ is connected.
\end{enumerate}
Then there exists a nonzero special cohomology class $$\eta\in H^1(F(\Gamma, 2); \Q),$$ defined uniquely up to sign, such that any class $\xi\in H^1(F(\Gamma, 2); \Q)$ can be uniquely represented in the form
\begin{eqnarray}\label{special}\xi = \alpha^\ast(u^+\times 1 +1\times u^-) +\lambda\eta\end{eqnarray}
where $u^\pm \in H^1(\Gamma; \Q)$ and $\lambda \in \Q$.
\end{theorem}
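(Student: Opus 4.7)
The plan is to construct $\eta$ as the pullback of the canonical generator $\omega$ of $H^1(F(\R^2,2);\Q)\cong \Q$ under the inclusion $\phi: F(\Gamma,2)\hookrightarrow F(\R^2,2)$ induced by the embedding $\Gamma\subset \R^2$, and then verify three properties: $\eta$ is special, $\eta$ is nonzero and essentially the only special class, and $\mathrm{im}(\alpha^\ast)+\Q\cdot\eta$ spans $H^1(F(\Gamma,2);\Q)$ as a direct sum.

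First I would check that $\eta=\phi^\ast\omega$ is special. Given a cycle $c=\sum n_ie_i\in C_1(\Gamma)$ and a vertex $v$ of $\Gamma$ not incident to any $e_i$, the pairing $\langle \eta, cv\rangle=\langle \omega,\phi_\ast[cv]\rangle$ equals the winding number of $c$, viewed as a planar $1$-cycle, around the point $v\in\R^2$. Since $\{z_1,\dots,z_r\}$ is a basis of $H_1(\Gamma)$, it suffices to check that each $z_i$ winds trivially around $v$. By assumption (a) every $\bar U_i$ with $i\ge 1$ is contractible, which in particular forbids nested bounded regions; since $v$ is a vertex of $\Gamma$ it lies on the boundary of some face but never in the open set $U_i$, so $v$ lies in the unbounded component of $\R^2\setminus \partial U_i$ and the winding vanishes. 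The symmetric argument handles $\langle\eta, vc\rangle$, so $\eta$ is special.

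To show $\eta\ne 0$ and to classify all special classes, I would invoke Corollary \ref{cor74}: $\beta_\ast$ identifies $H_1(F(\Gamma,2);\Q)$ with $H_1(F(\R^2,2);\Q)\oplus H_1(\Gamma;\Q)\oplus H_1(\Gamma;\Q)$. The cycles $c_iv_i$ and $v_ic_i$ from the end of Section 7 project to bases of the two $H_1(\Gamma;\Q)$ summands, while the winding argument above shows that every cycle of the form $cv$ or $vc$ has trivial projection to the $H_1(F(\R^2,2);\Q)$ summand. Hence the subspace of $H_1(F(\Gamma,2);\Q)$ spanned by all such cycles is precisely the preimage under $\beta_\ast$ of $0\oplus H_1(\Gamma;\Q)\oplus H_1(\Gamma;\Q)$, of dimension $2r$. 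The space of special cohomology classes is therefore the one-dimensional annihilator $\phi^\ast H^1(F(\R^2,2);\Q)$. The loop $\partial y$ from the proof of Proposition \ref{prop17} maps under $\phi_\ast$ to a generator of $H_1(F(\R^2,2))$, so $\langle\eta,\partial y\rangle=\pm 1\ne 0$; this integrality normalization pins $\eta$ down uniquely up to sign.

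Finally, for the direct sum decomposition the dimensions agree: $\dim H^1(F(\Gamma,2);\Q)=2b_1(\Gamma)+1$ by Theorem \ref{thm3}, $\dim\mathrm{im}(\alpha^\ast)=2b_1(\Gamma)$ by the Corollary following Proposition \ref{epi}, and $\dim\Q\eta=1$. For trivial intersection, suppose $\alpha^\ast(u^+\times 1+1\times u^-)=\lambda\eta$. Pairing both sides with $[\partial y]$ and using $\alpha_\ast[\partial y]=0$ (established in the proof of Corollary \ref{corepi1}) gives $\lambda\cdot(\pm 1)=0$, hence $\lambda=0$; injectivity of $\alpha^\ast$ on $H^1$ then forces $u^+=u^-=0$, and uniqueness of $u^\pm,\lambda$ in the representation of an arbitrary $\xi$ follows. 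The step I expect to require the most care is the winding-number vanishing in the second paragraph, since it rests on assumption (a) excluding nested faces; condition (b) plays no role here.
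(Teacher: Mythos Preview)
Your construction of $\eta=\phi^\ast\omega$ coincides with the paper's class: since $\langle\phi^\ast\omega,[c_iv_i]\rangle=\langle\phi^\ast\omega,[v_ic_i]\rangle=0$ and $\langle\phi^\ast\omega,[\partial y]\rangle=\pm 1$, your $\eta$ is the dual basis element $z^\ast_{2r+1}$ up to sign. Your dimension count and direct-sum verification in the last paragraph are a more explicit rendering of what the paper does in one sentence, so on the level of strategy the two proofs agree.

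The gap is exactly the step you flagged. The reduction ``it suffices to check that each $z_i$ winds trivially around $v$'' breaks down when $v\in\partial U_i$: then $\mathrm{wind}(z_i,v)$ is undefined even though $\mathrm{wind}(c,v)$ makes sense, and cancellations among the $z_i$ at $v$ can produce a cycle $c$ with $v\notin|c|$ but $\mathrm{wind}(c,v)\neq 0$. Concretely, let $\Gamma$ be the wheel with five spokes (a central vertex $v$ joined to the five vertices of a pentagon). All hypotheses hold: each sector $\bar U_i$ is a closed triangle, non-adjacent sectors meet only at $v$, and $\bar U_0$ deformation retracts to the outer pentagon. Taking $c$ to be the outer pentagon, the centre $v$ is incident to no edge of $c$, yet $\mathrm{wind}(c,v)=1$, so $\langle\eta,[cv]\rangle=1$ and $\eta$ is \emph{not} special. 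Worse, $\beta_\ast[cv]=(1,z_0,0)$ together with $\beta_\ast[c_iv_i]=(0,z_i,0)$ and $\beta_\ast[v_ic_i]=(0,0,z_i)$ span all of $H_1(F(\R^2,2)\times\Gamma\times\Gamma;\Q)$, so the only special class is zero and the theorem as literally stated fails for this $\Gamma$. The paper's proof shares this gap: it simply asserts that $z^\ast_{2r+1}$ is special, having checked it only against the chosen basis cycles $c_iv_i$, $v_ic_i$. You have not overlooked a saving argument; rather, the statement needs ``special'' weakened to vanishing on the specific classes $[c_iv_i]$, $[v_ic_i]$ (equivalently, to lying in the annihilator of $\ker\beta_\ast^{-1}\bigl(0\oplus H_1(\Gamma)\oplus H_1(\Gamma)\bigr)$), which is all that the proof of Theorem~\ref{thmspecial} actually uses.
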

\begin{proof}
In the discussion after Corollary \ref{cor74} we constructed a specific basis $z_1, \dots, z_{2r+1}\in H_1(F(\Gamma, 2); \Q)$ where $r=b_1(\Gamma)$.
The classes $z_1, \dots, z_{2r}$ are represented by closed curves of the form $c_iv_i$ and $v_ic_i$ with $c_i$ denoting the boundary of $U_i$ oriented
in the anticlockwise direction and $v_i\in \Gamma - \bar U_i$. The remaining class $z_{2r+1}$ is determined uniquely up to a sign. Consider the dual basis
$z^\ast_i\in H^1(F(\Gamma, 2); \Q)$, $i=1, \dots, 2r+1$. Then the classes $z^\ast_1, z^\ast_2, \dots, z^\ast_{2r}$ generate the image
of the homomorphism $\alpha^\ast: H^1(\Gamma \times \Gamma; \Q) \to H^1(F(\Gamma, 2); \Q)$ and the class $z^\ast_{2r+1}$ is special. This implies Theorem
\ref{thmlast}.
\end{proof}

Theorems \ref{thm81}, \ref{thmspecial} and \ref{thmlast} fully describe the structure of the cohomology algebra $H^\ast(F(\Gamma, 2); \Q)$.
\bibliographystyle{amsalpha}

\end{document}